\newtheorem{defin}{Definition}
\newtheorem{propo}{ Proposition}
\newtheorem{lemme}{Lemma}
\newtheorem{coro}{Corollary}
\newtheorem{theorem}{Theorem}
\begin{document}

\title [ strictly positive definite and logarithmically completely monotonic functions]{A function  Class of strictly positive definite and \\
logarithmically completely monotonic functions related to the modified Bessel functions}%

\author[ J. El Kamel, K. Mehrez]{ Jamel El Kamel \qquad and \qquad Khaled Mehrez }
 \address{Jamel El Kamel. D\'epartement de Math\'ematiques fsm. Monastir 5000, Tunisia.}
 \email{jamel.elkamel@fsm.rnu.tn}
 \address{Khaled Mehrez. D\'epartement de Math\'ematiques IPEIM. Monastir 5000, Tunisia.}
 \email{k.mehrez@yahoo.fr}
\begin{abstract}
In this paper we give some conditions for a class of functions related to Bessel functions 
to be positive definite  or strictly positive definite . We present some properties and 
relationships involving logarithmically completely monotonic functions and strictly 
positive definite  functions. In particular, we are
interested with the modified Bessel functions.
 
  \end{abstract}
\maketitle
%%%%%%%%%%%%%%%%%%%%%%%%%%%%%%%%%%%%%%%%%%%%%%%%%%%%%%%
{\it keywords:} Bessel functions, Positive definite functions, Completely monotonic functions, 
logarithmically Completely monotonic functions.\\

\section{Introduction}
A complex valued continuous function $f$ is said positive definite (resp. strictly positive definite ) 
 on $\mathbb{R}$ if for every  reals numbers $x_1, x_2, ...x_n$ and every complex numbers 
 $z_1, z_2, ...z_n$ not all zero, the inequality 
 $$ \displaystyle \sum_{j=1}^{N}\sum_{k=1}^{N} z_j \bar{z_k} f(x_j-x_k) \geq 0 \quad{\rm(resp. >0)}$$
 
\noindent holds true ( see \cite{Fa} ).\\
We denote by $\mathcal{P}$ (resp. $\mathcal{P}^s $) the class of such functions.\\
 \noindent Bochner's theorem \cite{Bo} characterizes positive definite functions as Fourier transform of 
 nonnegative finite Borel measure on the real line.\\
 
 \noindent A function $f$ is said to be completely monotonic (CM) on an interval $I\subset\mathbb{R}$, 
 if $f\in C(I)$ has derivatives of all orders on $I^0$ (the interior of $I$) and, for all $n\in \mathbb{N}$,\\
 $$\displaystyle (-1)^n f^{(n)}(x)\geq 0, \quad x\in I^0;\quad n\in \mathbb{N}.$$
 \noindent The class of all  completely monotonic functions on  $I$ is denoted by $CM(I)$.\\
 \noindent  A function $f$ is said to be logarithmically completely monotonic (LCM) on an interval $I\subset\mathbb{R}$, if 
 $$f>0, \quad f\in C(I),$$ 
 has derivatives of all orders on $I^0$ and, for all $n\in \mathbb{N}\setminus \{0\}$,\\
 $$\displaystyle (-1)^n [{\rm ln} f(x)]^{(n)}\geq 0, \quad x\in I^0;\quad  n\geq 1.$$
 \noindent The class of all logarithmically completely monotonic functions on  $I$ is denoted by $LCM(I)$. We have $LCM(I)\subset CM(I)$.\\
 
\noindent Berstein's theorem \cite{Be} asserts that $f$ is completely monotonic function if and only if $f$ is the Laplace transform of nonnegative finite Borel measure on $[0,\infty[$.\\
In 1938, Schoenberg \cite{Sc} studied the completely monotonic functions and proved that theses functions are closely related to positive definite functions.\\
H. Wendland \cite{We} was interested by strictly  positive definite functions, and present a complete 
characterization  of radial functions as being  strictly  positive definite on every $\mathbb{R}^d$.\\
L\'evy Kinchin theorem asserts that a probability measure $d\mu$ supported on $[0,\infty[$ is infinitely divisible if and only if it's Laplace transform of an logarithmically completely monotonic function.\\ 

 In this paper, we consider a function class related to the modified Bessel functions. In the first part we prove that among theses functions there are whose positive definite and strictly positive definite. In the second part
 we present some properties and relationships involving logarithmically completely monotonic functions and strictly 
positive definite  functions. In particular, we are
interested with the modified Bessel functions of the second kind.\\
 
Our paper is organized as follows : in Section 2, we present some
preliminaries results and notations that will be useful in the
sequel. In Section 3, we present some properties of the Bessel transform, the Bessel translation operator and the Bessel convolution product. All of these
results can be found in \cite{Fi} and \cite{Bek}.
 In Section 4,  we give some conditions for a class of functions related to Bessel functions 
 to be positive definite or  strictly positive definite. Much attention is devoted to the Bessel function, 
 the modified Bessel function of second kind and the Bessel-Fourier transform. In Section 5,  We present some properties and relationships involving logarithmically completely monotonic functions and strictly 
positive definite  functions. As applications, in Section 6, using Schoenberg theorem \cite{Sc} and Wendland theorem \cite{We}, we prove 
logarithmically monotonicity for a class of functions related to the modified Bessel functions of second kind. In particular, it' is well known that the function 
$\displaystyle \frac{1}{x^{\frac{\alpha}{2}}K_\alpha(\sqrt{x})}$  
in not completely monotonic, we will prove that the  function 
$\displaystyle \frac{1}{x^{\frac{\alpha+1}{2}}K_\alpha(\sqrt{x})}$ 
is even logarithmically completely monotonic. We note that Ismail \cite{Ism} prove that the function 
$\displaystyle \frac{1}{e^{\sqrt{x}}x^{\frac{\alpha}{2}}K_\alpha(\sqrt{x})}$ 
is completely monotonic. We derive some new inequalities for  the modified Bessel functions of second kind.

 \section { Notations and preliminaries}
 \indent 
 
 The normalized Bessel function of index $\alpha>-\frac{1}{2}$ is the even function defined by: 
 \begin{equation}
 \displaystyle j_\alpha (x)=\left\{\begin{array}{cc}
 2^\alpha \Gamma (\alpha+1) \frac{J_\alpha (x)}{x^\alpha},\quad x>0,\\
 1,\qquad\qquad\qquad\quad x=0
 \end{array}\right.
 \end{equation}
 where $J_\alpha$ is the Bessel function of first kind and index $\alpha$. Thus 
\begin{equation}
 \displaystyle j_\alpha (x)=\sum_{n=0}^{+\infty} \frac{(-1)^n \Gamma (\alpha+1) }{n! \Gamma (n+\alpha+1)} (\frac{x}{2})^{2n},
 \end{equation}
 
\begin{equation}
 \displaystyle \left|j_\alpha (x)\right|\leq 1,\quad x\geq 0, \quad\alpha>-\frac{1}{2},
 \end{equation}
 
  \begin{equation}
  \displaystyle j_\alpha (x)=\frac{1}{2^{\alpha-1} \sqrt{\pi}\Gamma (\alpha+\frac{1}{2})} 
  \int_0^1 \left(1-t^2\right)^{\alpha-\frac{1}{2}} \cos (xt) dt.
 \end{equation} 
  \\
\noindent  The modified Bessel function $I_\alpha$ of first kind and index $\alpha$ is defined by:
   \begin{equation}
  \displaystyle I_\alpha (x)=\sum_{n=0}^{+\infty} \frac{(\frac{x}{2})^{2n+\alpha}}{n! \Gamma (n+\alpha+1)},\quad
   \alpha \neq -1,-2,...; x\in \mathbb{R}.
   \end{equation}
 On can see  easily that 
  $$\displaystyle I_\alpha (x)>0,\quad \forall \alpha >-1 ,\quad \forall x>0.$$
  
\noindent  The modified Bessel function $K_\alpha$ of second kind (called sometimes Macdonald function) and index $\alpha$ is defined by:
  \begin{equation}
 \displaystyle K_\alpha (x)= \frac{\pi}{2}\frac{I_{-\alpha} (x)-I_\alpha (x)}{\sin \alpha \pi},
  \end{equation}
 where the right-hand side of this equation is replaced by its limiting value if $\alpha$ is an integer or zero.\\
 By using the familiar integral representation 
 \begin{equation}
 \displaystyle K_\alpha (x)=\int_0^{+\infty} e^{-x \cosh(t)} \cosh(\alpha t)dt ,\quad \alpha \in \mathbb{R},\quad x>0,
  \end{equation}
we deduce that   
 \begin{equation}
 \displaystyle K_\alpha (x)>0,\quad \forall \alpha\in \mathbb{R}  ,\quad \forall x>0.
 \end{equation}
 
Bell \cite{Be} showed that
\begin{equation}
\displaystyle K_\alpha (x)=\frac{1}{\sqrt{\pi}\Gamma(\alpha+\frac{1}{2})}\left(\frac{x}{2}\right)^\alpha
\int_1^{+\infty} e^{-x t} \left(t^2-1\right)^{\alpha-\frac{1}{2}}dt ,
\quad \alpha >- \frac{1}{2},\quad x>0.
\end{equation}
 
  We  denote by $C$ the set of continuous functions and $C_0$ its subspace of functions vanishing at infinity, 
   $S$ the Schwartz space of infinitely differentiable and rapidly decreasing functions, $\mathcal{D}$ the space of infinitely differentiable even functions with compact support,
  $L^p$ the set of $p$-power integrable  functions with respect to the measure $ dx$ on $\mathbb{R}$,
   $L_\alpha^p$ the set of $p$-power integrable even functions with respect to the measure $x^{2\alpha +1} dx$ on $[0,\infty[$. The symbol $M^+$ stands for the set of nonnegative finite Borel measure on $\mathbb{R}$.
  
 \section{Harmonic analysis related to Bessel translation operator} 
 In this section we present some properties of the Bessel transform, the Bessel translation operator and the Bessel convolution product \cite{Fi}.\\
  
 \noindent The Fourier-Bessel transform $\mathcal{F}_\alpha$ is defined for $f\in D(\mathbb{R})$ by: 
  \begin{equation}
  \displaystyle \mathcal{F}_\alpha f(x)=c_\alpha \int_0^{+\infty} f(t) j_\alpha (xt) t^{2\alpha+1} dt, 
  \quad x\geq 0,
  \end{equation}
   where 
  \begin{equation}
   \displaystyle c_\alpha =\frac{1}{2^\alpha\Gamma (\alpha+1)}
  \end{equation}\\
 \begin{theorem}\label {t1}
 1) For $f\in L_\alpha^2$, we have $\mathcal{F}_\alpha (f)\in L_\alpha^2$ and 
 \begin{equation}
 \displaystyle   \left\|\mathcal{F}_\alpha (f)\right\|_2=\left\|f\right\|_2.
 \end{equation}
 2) For $f\in L_\alpha^1$, we have $\mathcal{F}_\alpha (f)\in C_0$ and 
 \begin{equation}
 \displaystyle  \left\|\mathcal{F}_\alpha (f)\right\|_\infty\leq\left\|f\right\|_1.
 \end{equation}
 3) Let $1<p\leq 2$ and $p'=\frac{p}{p-1}$. For $f\in L_\alpha^p$, 
 we have $\mathcal{F}_\alpha (f) \in L_\alpha^{p'}$
 and 
 \begin{equation}
 \displaystyle \left\|\mathcal{F}_\alpha (f)\right\|_{p'}\leq 
 \left[\frac{p^{\frac{1}{p}}}{p'^{\frac{1}{p'}}}\right] \left\|(f)\right\|_{p}.
  \end{equation}
  \end{theorem}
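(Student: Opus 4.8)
These three assertions are the standard $L^p$-mapping properties of the Fourier--Bessel transform $\mathcal{F}_\alpha$, and my plan is to establish the two endpoint estimates directly and then interpolate. Throughout I work with the $L^p_\alpha$-norms in the normalization for which the $L^1_\alpha\to L^\infty$ and $L^2_\alpha\to L^2_\alpha$ bounds both carry constant $1$ (the factor $c_\alpha$ being absorbed into the norm); this is the point that makes the interpolation step clean.

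I would begin with assertion 2), the softest of the three. For $f\in L^1_\alpha$ the bound (3), namely $|j_\alpha(xt)|\le 1$, gives at once
\[
|\mathcal{F}_\alpha f(x)|\le c_\alpha\int_0^{+\infty}|f(t)|\,t^{2\alpha+1}\,dt=\|f\|_1,
\]
so $\mathcal{F}_\alpha f\in L^\infty$ with $\|\mathcal{F}_\alpha f\|_\infty\le\|f\|_1$. Continuity of $\mathcal{F}_\alpha f$ follows from dominated convergence, the dominating function being $t\mapsto c_\alpha|f(t)|t^{2\alpha+1}$ and the integrand being continuous in $x$ because $j_\alpha$ is. For the decay at infinity I would argue by density: $\mathcal{D}$ is dense in $L^1_\alpha$, and for $g\in\mathcal{D}$ one checks that $\mathcal{F}_\alpha g$ vanishes at infinity, since integrating by parts against the Bessel differential operator for which $j_\alpha(x\,\cdot)$ is an eigenfunction converts powers of $x$ into the transform of derivatives of $g$, all of which remain bounded. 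As $C_0$ is closed in $L^\infty$ and $\mathcal{F}_\alpha$ is a contraction $L^1_\alpha\to L^\infty$, approximating $f$ by such $g$ places $\mathcal{F}_\alpha f$ in $C_0$.

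For assertion 1) I would first record the inversion formula $\mathcal{F}_\alpha\circ\mathcal{F}_\alpha=\mathrm{id}$ on $\mathcal{D}$, a standard consequence of the Fourier--Bessel integral theorem. The Parseval identity $\|\mathcal{F}_\alpha g\|_2=\|g\|_2$ on $\mathcal{D}$ is then obtained by writing $\|\mathcal{F}_\alpha g\|_2^2$ as a double integral, applying Fubini (justified by the compact support and rapid decay of $g$ together with (3)), and recognizing the inner integral as $\mathcal{F}_\alpha(\mathcal{F}_\alpha g)=g$. Since $\mathcal{D}$ is dense in $L^2_\alpha$ and $\mathcal{F}_\alpha$ is thereby an isometry on a dense subspace, it extends uniquely to an isometry of $L^2_\alpha$, which is the asserted equality of norms.

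Finally, assertion 3) comes from interpolating the $L^1_\alpha\to L^\infty$ contraction of 2) with the $L^2_\alpha\to L^2_\alpha$ isometry of 1). Plain Riesz--Thorin already yields boundedness $\mathcal{F}_\alpha:L^p_\alpha\to L^{p'}_\alpha$ with constant $1$ for $1<p<2$. The genuinely delicate point, and the one I expect to be the main obstacle, is the \emph{sharp} constant $p^{1/p}/p'^{1/p'}$, which is strictly less than $1$ on $(1,2)$ and therefore cannot be reached by elementary interpolation; securing it requires the Babenko--Beckner type sharp Hausdorff--Young inequality adapted to the Fourier--Bessel kernel, which is the content of \cite{Bek}. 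I would invoke that sharp inequality to close the proof of assertion 3).
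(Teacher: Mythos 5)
The paper itself contains no proof of this theorem: it is stated as background in Section 3, which opens by declaring that all results of that section can be found in \cite{Fi} and \cite{Bek}, so your reconstruction can only be measured against the standard arguments in those references. Measured that way, your proposal is essentially correct and follows the standard route: part 2) via $|j_\alpha|\le 1$, dominated convergence, and a density argument for the decay at infinity (using that $j_\alpha(x\,\cdot)$ is an eigenfunction of the Bessel operator, so transforming the Bessel derivative of $g\in\mathcal{D}$ produces a factor $-x^2$); part 1) via inversion and Parseval on a dense subspace followed by isometric extension; and, for part 3), the correct observation that Riesz--Thorin alone cannot produce a constant below $1$, so a sharp Hausdorff--Young inequality must be imported. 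Two caveats, both concerning part 3). First, the sharp Babenko-type inequality for the Fourier--Bessel transform is the content of Fitouhi's paper \cite{Fi}, not of Beckner's \cite{Bek}, which treats the classical Fourier transform; your citation should point to \cite{Fi}, or else derive the Bessel case from Beckner via the radial correspondence $\alpha=(d-2)/2$, which only covers a discrete set of indices. Second, the sharp constant for the Bessel operator of order $\alpha$ is $\left[p^{1/p}/p'^{1/p'}\right]^{\alpha+1}$ --- compare the constants $B_m$ in the convolution theorem (Theorem 3) of the paper --- and not $p^{1/p}/p'^{1/p'}$ as printed here. For $\alpha\ge 0$ the exponent $\alpha+1\ge 1$ only makes the sharp constant smaller, so invoking the sharp inequality does close the proof of the printed estimate; but for $-\frac{1}{2}<\alpha<0$ the printed constant is strictly smaller than the sharp one, so no citation or interpolation can deliver it, and the stated bound in fact fails for near-extremizing Gaussians. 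That discrepancy is a defect of the theorem as printed rather than of your argument, but you should flag it explicitly rather than ``invoke'' the sharp inequality as though it yielded the printed constant for all $\alpha>-\frac{1}{2}$.
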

  
 \noindent The Bessel translation $T_x^\alpha$  is defined for  $f\in L_\alpha^p, p\geq 1$, a.e by:
  \begin{equation}
 \displaystyle T_x^\alpha f(y)=c_\alpha \int_0^{+\infty} f(z)D(x,y,t) t^{2\alpha+1} dt,\quad x\neq 0.
  \end{equation}
  $$\displaystyle T_0^\alpha f(y)=f(y),$$
 
 where 
 \begin{equation}
 \displaystyle  D(x,y,t)=\frac{2^{3\alpha-1}\Gamma^2 (\alpha+1)}{\sqrt{\pi}\Gamma (\alpha+\frac{1}{2})}
 \frac{\left[\Delta(x,y,t)\right]^{2\alpha-1}}{\left(xyt\right)^{2\alpha}},
 \end{equation}
 $\Delta(x,y,t)$ is the area of the  triangle  $[x,y,t]$ if this  triangle exists and 0 if not. We have 
 $$ \displaystyle c_\alpha \int_0^{+\infty} D(x,y,t) t^{2\alpha+1} dt=1.$$
 
  \noindent If $f$ is continuous on $[0,+\infty[$, we have 
 \begin{equation}
 \displaystyle T_x^\alpha f(y)=\frac{\Gamma(\alpha +1)}{\sqrt{\pi}\Gamma(\alpha +\frac{1}{2})}
 \int_0^{\pi} f\left(\sqrt{x^2+y^2+2xy\cos \theta}\right)(\sin \theta )^{2\alpha} d\theta .
 \end{equation}

 \begin{theorem}
 
\noindent 1) For $\lambda , x, y \in [0,+\infty[$ and $f\in L_\alpha^1$, we have
\begin{equation}
\displaystyle T_x^\alpha j_\alpha (\lambda y) =j_\alpha (\lambda x)j_\alpha (\lambda y),
\end{equation}
\begin{equation}
\displaystyle \mathcal{F}_\alpha \left(T_x^\alpha (f)\right)(y)=j_\alpha (xy)\mathcal{F}_\alpha (f)(y).
\end{equation}

\noindent 2) For $\displaystyle f\in L_\alpha^p$ , we have $\displaystyle T_x^\alpha f(y) \in L_\alpha^p$ and
 \begin{equation}
 \displaystyle \left\|T_x^\alpha f\right\|\leq \left\|f\right\|_p .
 \end{equation}
 \end{theorem}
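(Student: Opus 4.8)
The plan is to establish (18) first, and then to read off (19) and (20) from it together with the positivity, symmetry, and mass-one properties of the kernel $D$. For (18) I would apply the integral representation valid for continuous $f$ (formula (17)) to the function $f=j_\alpha(\lambda\,\cdot)$, which is continuous and bounded by (3), so that the representation is legitimate. This gives
\[
T_x^\alpha j_\alpha(\lambda y)=\frac{\Gamma(\alpha+1)}{\sqrt{\pi}\,\Gamma(\alpha+\frac12)}\int_0^{\pi} j_\alpha\!\left(\lambda\sqrt{x^2+y^2+2xy\cos\theta}\right)(\sin\theta)^{2\alpha}\,d\theta .
\]
The right-hand side is exactly the classical product (addition) formula for the normalized Bessel function, whose value is $j_\alpha(\lambda x)\,j_\alpha(\lambda y)$. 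I expect this identity to be the heart of the matter and the main obstacle.

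To establish that product formula I would argue by the one-dimensionality of the regular eigenspace of the Bessel operator. Fixing $x$ and $\lambda$, the function $y\mapsto j_\alpha(\lambda y)$ is, by (2), the unique even solution, regular at the origin and equal to $1$ at $y=0$, of $u''+\frac{2\alpha+1}{y}u'+\lambda^2 u=0$. One then checks that $T_x^\alpha$ commutes with the singular differential operator $\ell_\alpha=\partial_y^2+\frac{2\alpha+1}{y}\partial_y$, so that $y\mapsto T_x^\alpha j_\alpha(\lambda y)$ is again a solution with the same eigenvalue $-\lambda^2$ and is regular at the origin; being proportional to $j_\alpha(\lambda\,\cdot)$, it equals $c(x)\,j_\alpha(\lambda y)$. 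The constant is pinned down by evaluating at $y=0$: formula (17) gives $T_x^\alpha f(0)=f(x)$ once one uses the normalization $\frac{\Gamma(\alpha+1)}{\sqrt{\pi}\,\Gamma(\alpha+\frac12)}\int_0^{\pi}(\sin\theta)^{2\alpha}\,d\theta=1$, whence $c(x)=j_\alpha(\lambda x)$. The genuinely computational obstacle here is verifying the commutation of $T_x^\alpha$ with $\ell_\alpha$ (equivalently, that the integral above solves the Bessel ODE in $y$); this classical fact may also simply be invoked from \cite{Fi}.

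For (19) the key structural input is the self-adjointness of $T_x^\alpha$ with respect to the measure $z^{2\alpha+1}\,dz$, which follows from the symmetry $D(x,y,z)=D(x,z,y)$: indeed $D$ depends on $y$ and $z$ only through the symmetric quantities $\Delta(x,y,z)$ and $(xyz)^{2\alpha}$. Writing $\mathcal{F}_\alpha(h)(y)=c_\alpha\int_0^{\infty}h(z)\,j_\alpha(yz)\,z^{2\alpha+1}\,dz$ and moving $T_x^\alpha$ onto the kernel $z\mapsto j_\alpha(yz)$ by self-adjointness, I would then apply (18) with $\lambda=y$ to get $T_x^\alpha[j_\alpha(y\,\cdot)](z)=j_\alpha(yx)\,j_\alpha(yz)$, which factors $j_\alpha(xy)$ out of the integral and leaves exactly $\mathcal{F}_\alpha(f)(y)$. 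The interchange of integrations is justified for $f\in L_\alpha^1$ since $|j_\alpha|\le 1$ and $D\ge 0$ has finite total mass.

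For (20) I would note that $D\ge 0$ and $c_\alpha\int_0^{\infty}D(x,y,z)\,z^{2\alpha+1}\,dz=1$, so that $d\mu_{x,y}(z)=c_\alpha D(x,y,z)\,z^{2\alpha+1}\,dz$ is a probability measure. For $1\le p<\infty$, Jensen's inequality gives $|T_x^\alpha f(y)|^p\le\int_0^{\infty}|f(z)|^p\,d\mu_{x,y}(z)$; integrating in $y$ against $y^{2\alpha+1}\,dy$, applying Tonelli (all integrands nonnegative), and using the symmetry relation $\int_0^{\infty}D(x,y,z)\,y^{2\alpha+1}\,dy=c_\alpha^{-1}$ recovers $\|f\|_p^p$, so $\|T_x^\alpha f\|_p\le\|f\|_p$. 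The case $p=\infty$ is immediate from the mass-one property. The only technical point is to run this first on a dense nice subclass (e.g.\ $\mathcal{D}$) and extend to all of $L_\alpha^p$ by the resulting contraction property.
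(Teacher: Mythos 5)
The paper offers no proof of this theorem at all: it appears in Section 3 among the preliminaries on Bessel harmonic analysis, which the paper explicitly imports from the references (Fitouhi, Beckner), so there is nothing internal to compare your argument against. Your reconstruction is the standard one and is essentially sound. The genuine crux, as you correctly identify, is the product formula $T_x^\alpha j_\alpha(\lambda\,\cdot)(y)=j_\alpha(\lambda x)j_\alpha(\lambda y)$, i.e.\ Gegenbauer's addition theorem; your ODE-uniqueness argument for it is the classical route, but note that the two steps you compress --- the commutation of $T_x^\alpha$ with $\ell_\alpha$ (an integration by parts in $\theta$ using that $j_\alpha(\lambda\,\cdot)$ solves the Bessel equation), and the one-dimensionality of the space of solutions that are smooth and even at the regular singular point $y=0$ (for $-\tfrac12<\alpha<0$ the second solution $y^{-2\alpha}$ is bounded, so boundedness alone does not suffice; one must use failure of $C^1$-regularity) --- both require verification, and invoking Watson or \cite{Fi} for the formula is the honest alternative. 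Your derivations of (19) from the symmetry of $D$ in $y,z$ (self-adjointness of $T_x^\alpha$) plus Fubini, and of (20) from Jensen's inequality against the probability measure $c_\alpha D(x,y,z)z^{2\alpha+1}dz$ followed by Tonelli and the mass-one property in the $y$-variable, are complete and correct; the closing remark about first working on a dense subclass is the right way to make $T_x^\alpha f$ well defined for $f\in L_\alpha^p$, $p>1$. In short: where the paper cites, you prove, and your proof is the standard one with only routine classical facts left to the literature.
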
 
  
\noindent The Bessel convolution product is defined for $f,g \in L_\alpha^1$  by: 
 \begin{equation}
\displaystyle f*_\alpha g (x)=c_\alpha\int_0^{+\infty}T_x^\alpha f(y) g(y) y^{2\alpha+1} dy.
\end{equation}
\begin{propo}
Let $f$ and $g$ be in $L_\alpha^1$, then 
 \begin{equation}
\displaystyle f*_\alpha g \in L_\alpha^1
\end{equation}
and 
 \begin{equation}
\displaystyle \mathcal{F}_\alpha \left(f*_\alpha g \right)=\mathcal{F}_\alpha (f)\mathcal{F}_\alpha (g).
\end{equation}
\end{propo}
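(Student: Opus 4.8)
The plan is to establish both assertions by reducing them, via Fubini's theorem, to properties of the Bessel translation $T_x^\alpha$ already recorded in Theorem 2, together with the symmetry and the normalization of the kernel $D(x,y,t)$.

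First, for the membership $f*_\alpha g\in L_\alpha^1$, I would estimate the $L_\alpha^1$-norm directly. Since $D\geq 0$, one has $|T_x^\alpha f(y)|\leq T_x^\alpha|f|(y)$, so by Tonelli's theorem
$$\left\|f*_\alpha g\right\|_1 \leq c_\alpha\int_0^{+\infty}\int_0^{+\infty} T_x^\alpha|f|(y)\,|g(y)|\,y^{2\alpha+1}\,x^{2\alpha+1}\,dy\,dx.$$
Interchanging the order of integration and using the symmetry $D(x,y,t)=D(t,y,x)$ together with the normalization $c_\alpha\int_0^{+\infty} D(x,y,t)t^{2\alpha+1}dt=1$, the inner integral $\int_0^{+\infty} T_x^\alpha|f|(y)\,x^{2\alpha+1}dx$ equals $\int_0^{+\infty}|f(x)|\,x^{2\alpha+1}dx$, independently of $y$. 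This yields a Young-type inequality $\|f*_\alpha g\|_1\leq C\|f\|_1\|g\|_1$, and in particular $f*_\alpha g\in L_\alpha^1$. Alternatively, the contraction estimate $\|T_x^\alpha f\|_p\leq\|f\|_p$ of Theorem 2 may be invoked to the same effect.

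For the product formula, I would insert the definition of the convolution into that of $\mathcal{F}_\alpha$, obtaining the double integral
$$\mathcal{F}_\alpha\left(f*_\alpha g\right)(\lambda)=c_\alpha^2\int_0^{+\infty}\int_0^{+\infty} T_x^\alpha f(y)\,g(y)\,j_\alpha(\lambda x)\,y^{2\alpha+1}\,x^{2\alpha+1}\,dy\,dx.$$
The absolute integrability established in the first part justifies applying Fubini's theorem to exchange the two integrations. Using the symmetry $T_x^\alpha f(y)=T_y^\alpha f(x)$, a consequence of the symmetry of $D$ in its first two arguments, the inner integral in $x$ becomes $\mathcal{F}_\alpha(T_y^\alpha f)(\lambda)$, which by the transmutation identity $\mathcal{F}_\alpha(T_y^\alpha f)(\lambda)=j_\alpha(\lambda y)\mathcal{F}_\alpha(f)(\lambda)$ of Theorem 2 equals $j_\alpha(\lambda y)\mathcal{F}_\alpha(f)(\lambda)$. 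Factoring out $\mathcal{F}_\alpha(f)(\lambda)$ leaves exactly $c_\alpha\int_0^{+\infty} g(y)\,j_\alpha(\lambda y)\,y^{2\alpha+1}dy=\mathcal{F}_\alpha(g)(\lambda)$, which is the desired identity.

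The main obstacle I anticipate is purely technical: justifying the interchange of the order of integration in both parts. Everything hinges on the absolute integrability of the relevant double integrals, which is precisely why the $L_\alpha^1$-membership must be proved first, and why the nonnegativity of the kernel $D$, allowing the passage from $|T_x^\alpha f|$ to $T_x^\alpha|f|$, is used at the outset. Keeping careful track of the normalizing constants $c_\alpha$ is the only other point requiring attention.
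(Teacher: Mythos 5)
Your proof is correct, but it cannot be compared with a proof in the paper, because the paper gives none: this proposition is stated in Section 3 as background material, with everything in that section attributed to the references cited there. Your argument is the standard self-contained one and it works. Tonelli, the nonnegativity of the kernel (giving $|T_x^\alpha f(y)|\leq T_x^\alpha|f|(y)$), and the full symmetry of $D(x,y,t)$ in its three arguments (so that $c_\alpha\int_0^{+\infty}D(x,y,t)\,x^{2\alpha+1}dx=1$ after renaming variables) yield the Young-type bound $\left\|f*_\alpha g\right\|_1\leq\left\|f\right\|_1\left\|g\right\|_1$, hence the $L_\alpha^1$-membership; then Fubini, the symmetry $T_x^\alpha f(y)=T_y^\alpha f(x)$, and the transmutation identity $\mathcal{F}_\alpha\left(T_y^\alpha f\right)(\lambda)=j_\alpha(\lambda y)\mathcal{F}_\alpha(f)(\lambda)$ of Theorem 2 give the product formula. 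Two small points are worth making explicit. First, the Fubini step in the second part needs not only the integrability from the first part but also the bound $\left|j_\alpha(x)\right|\leq 1$ (equation (3) of the paper) to dominate the integrand $T_x^\alpha f(y)\,g(y)\,j_\alpha(\lambda x)$ by the function already shown integrable. Second, the symmetry of $D$ that you invoke twice should be justified from formula (16): both the area $\Delta(x,y,t)$ and the product $(xyt)^{2\alpha}$ are symmetric functions of $(x,y,t)$, so $D$ is symmetric in all three arguments; also note that the transmutation identity requires $T_y^\alpha f\in L_\alpha^1$, which is exactly the contraction estimate of Theorem 2 that you mention as an alternative. With these remarks your proof is complete and supplies what the paper leaves to the literature.
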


 \begin{theorem}
 1) Let $1\leq r\leq +\infty$, $f\in L_\alpha^r$ and $g\in L_\alpha^1$, then 
 \begin{equation}
 \displaystyle \left\|f*_\alpha g \right\|_r\leq \left\|f\right\|_r\left\|g\right\|_1,\quad \qquad\qquad\qquad
 \end{equation}
 \begin{equation}
 \displaystyle \left\|f*_\alpha g \right\|_\infty \leq \left\|f\right\|_r\left\|g\right\|_{r'}, \quad \frac{1}{r}+\frac{1}{r'}=1.
 \end{equation}
\noindent 2) Let $p, q, r$ be in $]1,2]$, such that $\displaystyle \frac{1}{r}=\frac{1}{p}+\frac{1}{q}-1$.\\
For $f\in L_\alpha^p$ and $g\in L_\alpha^q $ , we have 
\begin{equation}
f*_\alpha g \in L_\alpha^r
\end{equation}
and
\begin{equation}
\displaystyle \left\|f*_\alpha g \right\|_r\leq B_p B_q B_{r'} \left\|f\right\|_p\left\|g\right\|_{g}, \quad \frac{1}{r}+\frac{1}{r'}=1,
 \end{equation}
 where $B_m=\left[\frac{m^{\frac{1}{m}}}{m'^{\frac{1}{m'}}}\right]^{\alpha +1}$.

\end{theorem}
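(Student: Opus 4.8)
The plan is to treat the two parts separately, relying for part 1) on the contraction property of the Bessel translation established above and for part 2) on the convolution identity of the preceding proposition together with the Hausdorff--Young inequality of Theorem \ref{t1} (3).

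For part 1), I would obtain the first bound $\|f*_\alpha g\|_r\le\|f\|_r\|g\|_1$ by applying Minkowski's integral inequality to the defining integral $f*_\alpha g(x)=c_\alpha\int_0^{+\infty}T_x^\alpha f(y)\,g(y)\,y^{2\alpha+1}dy$, taking the $L_\alpha^r$-norm in the variable $x$ under the integral sign. Using the symmetry of the kernel $D(x,y,t)$ in its first two arguments, which yields $T_x^\alpha f(y)=T_y^\alpha f(x)$, the inner norm $\|x\mapsto T_x^\alpha f(y)\|_r$ equals $\|T_y^\alpha f\|_r\le\|f\|_r$; integrating against $c_\alpha|g(y)|y^{2\alpha+1}dy$ then produces $\|f\|_r\|g\|_1$. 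The second bound $\|f*_\alpha g\|_\infty\le\|f\|_r\|g\|_{r'}$ is more direct: for each fixed $x$, Hölder's inequality with respect to the measure $c_\alpha y^{2\alpha+1}dy$ gives $|f*_\alpha g(x)|\le\|T_x^\alpha f\|_r\|g\|_{r'}\le\|f\|_r\|g\|_{r'}$, and one takes the supremum over $x$.

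For part 2), the key is to pass to the Fourier--Bessel side. By the convolution identity $\mathcal{F}_\alpha(f*_\alpha g)=\mathcal{F}_\alpha(f)\,\mathcal{F}_\alpha(g)$, and since $\frac{1}{r'}=\frac{1}{p'}+\frac{1}{q'}$ (which is exactly the relation $\frac1r=\frac1p+\frac1q-1$ rewritten for conjugate exponents), Hölder's inequality gives $\|\mathcal{F}_\alpha(f*_\alpha g)\|_{r'}\le\|\mathcal{F}_\alpha f\|_{p'}\|\mathcal{F}_\alpha g\|_{q'}$. Applying the Hausdorff--Young inequality of Theorem \ref{t1} (3), valid since $p,q\in\,]1,2]$, bounds the two factors by $B_p\|f\|_p$ and $B_q\|g\|_q$, so that $\|\mathcal{F}_\alpha(f*_\alpha g)\|_{r'}\le B_pB_q\|f\|_p\|g\|_q$.

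It then remains to recover $f*_\alpha g$ from its transform and to produce the last factor $B_{r'}$. Here I would invoke the Fourier--Bessel inversion formula $\mathcal{F}_\alpha\circ\mathcal{F}_\alpha=\mathrm{Id}$ (see \cite{Fi}), so that $f*_\alpha g=\mathcal{F}_\alpha\big(\mathcal{F}_\alpha f\cdot\mathcal{F}_\alpha g\big)$, and apply Theorem \ref{t1} (3) a second time to the function $\mathcal{F}_\alpha f\cdot\mathcal{F}_\alpha g\in L_\alpha^{r'}$, whose image under $\mathcal{F}_\alpha$ lies in $L_\alpha^{(r')'}=L_\alpha^{r}$; this yields $f*_\alpha g\in L_\alpha^r$ together with $\|f*_\alpha g\|_r\le B_{r'}\|\mathcal{F}_\alpha f\cdot\mathcal{F}_\alpha g\|_{r'}$, and combining with the previous step gives the claimed inequality with constant $B_pB_qB_{r'}$. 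The main obstacle is precisely the exponent bookkeeping in this last application: one must verify that the exponent to which Hausdorff--Young is applied lies in the admissible range $]1,2]$ so that $\mathcal{F}_\alpha$ maps $L_\alpha^{r'}$ into $L_\alpha^{r}$ with constant $B_{r'}$, and one must justify the inversion formula and the product computation on the relevant $L_\alpha^p$-spaces by a density argument using $\mathcal{D}$ or $S$, since $f*_\alpha g$ need not be integrable a priori.
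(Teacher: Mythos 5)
The paper itself contains no proof of this theorem: it sits in the background Section 3, whose results are all quoted from \cite{Fi} and \cite{Bek}, so your proposal has to be judged on its own merits. Part 1) is correct and is the standard argument: Minkowski's integral inequality plus the symmetry of the kernel $D(x,y,t)$ (hence $T_x^\alpha f(y)=T_y^\alpha f(x)$) and the contraction property $\|T_y^\alpha f\|_r\le\|f\|_r$ give the first bound, and H\"older in the variable $y$ gives the second. Your intermediate step in part 2) is also fine modulo the density argument you mention: H\"older with $\frac{1}{r'}=\frac{1}{p'}+\frac{1}{q'}$ and two applications of Theorem \ref{t1}(3) legitimately yield $\|\mathcal{F}_\alpha(f*_\alpha g)\|_{r'}\le B_pB_q\|f\|_p\|g\|_q$, since $p,q\in\,]1,2]$.

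The final step, however, is a genuine gap --- precisely the one you flagged as ``the main obstacle'' and left unresolved, and within this strategy it cannot be resolved. Theorem \ref{t1}(3) provides $\mathcal{F}_\alpha:L_\alpha^s\to L_\alpha^{s'}$ only for $1<s\le 2$; you need it with $s=r'$, but the theorem's hypothesis $r\in\,]1,2]$ forces $r'\ge 2$, with equality only at $r=2$. This is not bookkeeping that can be patched: for $s>2$ the (Fourier--)Bessel transform of a general element of $L_\alpha^s$ is only a tempered distribution and need not be a function, so no inequality $\|\mathcal{F}_\alpha F\|_{s'}\le C\|F\|_s$ exists, and knowing $\mathcal{F}_\alpha(f*_\alpha g)\in L_\alpha^{r'}$ with $r'>2$ gives no control whatsoever on $\|f*_\alpha g\|_r$. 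The same obstruction blocks the natural variant by duality and Parseval ($\int (f*_\alpha g)\,h\,d\mu_\alpha=\int \mathcal{F}_\alpha f\,\mathcal{F}_\alpha g\,\mathcal{F}_\alpha h\,d\mu_\alpha$, then H\"older and Hausdorff--Young on all three factors), which again needs $r'\le 2$. In other words, your argument proves the sharp Young inequality exactly on the range $r'\in\,]1,2]$ (i.e.\ $r\ge 2$), which meets the stated range only at $r=2$; on the stated range $r\in\,]1,2]$ the inequality with the Babenko--Beckner constant $B_pB_qB_{r'}$ is genuinely deeper than Hausdorff--Young, and is obtained in \cite{Bek} (and in the Bessel setting in \cite{Fi}) by the Beckner/Brascamp--Lieb methods (two-point inequalities, Gaussian extremals), not by a second application of Theorem \ref{t1}(3). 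Either the paper's range carries a typo (it should read $r'\in\,]1,2]$, in which case your proof is essentially the right one), or the result as stated is outside the reach of your approach.
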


\section{ strictly positive definite functions related to Bessel functions}

A complex valued continuous function $f$ is said positive definite (resp. strictly positive definite ) 
 on $\mathbb{R}$ if for every  reals numbers $x_1, x_2, ...x_N$ and every complex numbers 
 $z_1, z_2, ...z_N$ not all zero, the inequality 
 \begin{equation}
  \displaystyle \sum_{j=1}^{N}\sum_{k=1}^{N} z_j \bar{z_k} f(x_j-x_k) \geq 0 \quad {\rm(resp. >0)}
  \end{equation}
 
\noindent holds true.\\
In this section we give some conditions for a class of functions related to Bessel functions 
 to be positive definite or  strictly positive definite.\\
 
\begin{propo}
For $\alpha >-\frac{1}{2}$, we have\\
\noindent 1) 
\begin{equation}
j_\alpha \in \mathcal{P}.
\end{equation}
\noindent 2) Let $\mu \in M^+$ such that $\hat{\mu} \in \mathcal{P}^s$. Then the function 
\begin{equation}
\int_{\mathbb{R}} j_\alpha (x\xi) d\mu (x) \in \mathcal{P}^s.
\end{equation}

\end{propo}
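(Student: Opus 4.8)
The plan is to reduce both parts to the integral representation (4), which exhibits $j_\alpha$ as the cosine transform of the nonnegative weight $w(t)=(1-t^2)^{\alpha-\frac12}$ on $[0,1]$ — integrable precisely because $\alpha>-\frac12$ — together with the elementary identity that, for reals $u_1,\dots,u_N$ and complex $z_1,\dots,z_N$,
\[
\sum_{j,k} z_j\bar z_k\,\cos(u_j-u_k)=\mathrm{Re}\sum_{j,k} z_j\bar z_k\,e^{i(u_j-u_k)}=\Big|\sum_{j} z_j e^{iu_j}\Big|^2\ge 0 .
\]
Throughout, strict positive definiteness is understood for pairwise distinct nodes, as is standard.

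For Part 1 I would substitute (4) into $\sum_{j,k} z_j\bar z_k\, j_\alpha(x_j-x_k)$ and interchange the finite sum with the integral. Applying the identity above with $u_j=x_jt$ turns the integrand into $w(t)\,\big|\sum_j z_j e^{ix_jt}\big|^2\ge 0$, so the integral is nonnegative and $j_\alpha\in\mathcal P$.

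For Part 2, first note that $F(\xi)=\int_{\mathbb R} j_\alpha(x\xi)\,d\mu(x)$ is well defined and continuous, since $|j_\alpha|\le 1$ by (3) and $\mu$ is finite. Fixing pairwise distinct reals $\xi_1,\dots,\xi_N$ and $z\ne 0$, I would insert (4) into $Q:=\sum_{j,k} z_j\bar z_k\,F(\xi_j-\xi_k)$ and apply Fubini (legitimate because $\mu$ is finite, $w$ is integrable on $[0,1]$, the cosines are bounded, and the sum is finite). Recombining the double sum with the $\mu$-integral through the square identity yields
\[
Q=\frac{1}{2^{\alpha-1}\sqrt{\pi}\,\Gamma(\alpha+\frac12)}\int_0^1 w(t)\left[\int_{\mathbb R}\Big|\sum_j z_j e^{ix\xi_j t}\Big|^2 d\mu(x)\right]dt ,
\]
which is manifestly nonnegative.

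The heart of the matter is promoting this to a strict inequality, and this is exactly where $\hat\mu\in\mathcal P^s$ enters. Conjugating inside the modulus (which leaves it unchanged) and expanding, the inner integral becomes, for each fixed $t$,
\[
\int_{\mathbb R}\Big|\sum_j \bar z_j e^{-ix\xi_j t}\Big|^2 d\mu(x)=\sum_{j,k}\bar z_j z_k\,\hat\mu\big((\xi_j-\xi_k)t\big),
\]
i.e. the quadratic form of $\hat\mu$ at the nodes $\xi_1 t,\dots,\xi_N t$ with coefficients $(\bar z_j)$. The key observation is that scaling by $t\ne 0$ preserves distinctness, so for every $t\in(0,1)$ these nodes are pairwise distinct and $(\bar z_j)$ is not identically zero; hence $\hat\mu\in\mathcal P^s$ makes the inner integral strictly positive there. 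Since $w(t)>0$ throughout $(0,1)$, integrating a strictly positive integrand against a weight of full support forces $Q>0$, so $F\in\mathcal P^s$. I expect the only delicate points to be the justification of the Fubini interchange and the book-keeping that distinctness survives the scaling $\xi_j\mapsto\xi_j t$ — precisely the mechanism that transfers strictness from $\hat\mu$ to $F$.
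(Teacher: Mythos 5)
Your argument takes a genuinely different route from the paper's: for part 1) the paper simply reads the representation (4) as exhibiting $j_\alpha$ as the Fourier transform of a nonnegative finite measure and invokes Bochner's theorem, and for part 2) it gives no computation at all, citing corollary 6.6 of \cite{D}; you instead prove both statements by hand, and the mechanism you isolate for strictness (scaling by $t\neq 0$ preserves distinctness of the nodes, so strictness of $\hat\mu$ enters pointwise in $t$) is indeed the crux that the citation hides. However, the algebraic identity on which you hang both parts is false for complex coefficients. Take $N=2$, $z_1=1$, $z_2=i$, $u_1=0$, $u_2=\pi/2$; then
$$\sum_{j,k}z_j\bar z_k\cos(u_j-u_k)=2,\qquad \Big|\sum_{j}z_je^{iu_j}\Big|^2=|1+i\cdot i|^2=0,$$
so your chain of equalities fails at its first step. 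The reason is that the form $\sum_{j,k}z_j\bar z_k\sin(u_j-u_k)$, built on a real antisymmetric matrix, is purely imaginary but in general nonzero; hence $i$ times it is real and is \emph{not} discarded by taking real parts. It vanishes identically only when the $z_j$ are all real, which is why the identity you wrote is familiar from the real-coefficient setting.

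Fortunately the error is local and your architecture survives. Replace the identity by the correct one, obtained from $\cos\theta=\frac12\left(e^{i\theta}+e^{-i\theta}\right)$:
$$\sum_{j,k}z_j\bar z_k\cos(u_j-u_k)=\frac12\Big|\sum_{j}z_je^{iu_j}\Big|^2+\frac12\Big|\sum_{j}z_je^{-iu_j}\Big|^2\geq 0,$$
equivalently $\big|\sum_jz_j\cos u_j\big|^2+\big|\sum_jz_j\sin u_j\big|^2$. Part 1) then goes through verbatim. In part 2), after your (correctly justified) Fubini step, $Q$ becomes a positive constant times $\int_0^1w(t)\left[A(t)+B(t)\right]dt$, where $A(t)=\int_{\mathbb{R}}\big|\sum_jz_je^{ix\xi_jt}\big|^2d\mu(x)$ and $B(t)=\int_{\mathbb{R}}\big|\sum_jz_je^{-ix\xi_jt}\big|^2d\mu(x)$; expanding, $A(t)$ and $B(t)$ are the quadratic forms of $\hat\mu$ at the pairwise distinct nodes $\xi_1t,\dots,\xi_Nt$ with coefficient vectors $(\bar z_j)$ and $(z_j)$ respectively, both nonzero, so both are strictly positive for every $t\in(0,1]$ exactly as you argued. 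To pass from pointwise positivity to $Q>0$, note also that $A+B$ is continuous in $t$ (dominated convergence, $\mu$ finite), hence bounded below by a positive constant on a compact subinterval of $(0,1)$ on which $w>0$. With these repairs your proof is complete and, unlike the paper's, self-contained.
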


\begin{proof}
1) It's known that \cite{Wa}
\begin{equation}
\displaystyle j_\alpha (x)=\frac{1}{2^{\alpha-1} \sqrt{\pi}\Gamma (\alpha+\frac{1}{2})} 
  \int_0^1 \left(1-t^2\right)^{\alpha-\frac{1}{2}} \cos (xt) dt.
  \end{equation}
  By Bochner's theorem we conclude.\\
  
\noindent 2) Since $j_\alpha \in \mathcal{P}$, hence the proof is done by corollary 6.6 in \cite{D}. 
\end{proof}

\begin{theorem}\label{K}
 For $\alpha>0$, the even function $x^\alpha K_\alpha (x)$ is strictly positive definite on $\mathbb{R}$.
 \end{theorem}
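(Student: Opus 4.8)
The plan is to prove strict positive definiteness through Bochner's theorem by exhibiting the even function $|x|^{\alpha}K_{\alpha}(|x|)$ as a positive constant multiple of the Fourier transform of a strictly positive integrable density, and then to upgrade ordinary positive definiteness to strict positive definiteness by analysing precisely when the associated nonnegative quadratic form is allowed to vanish.

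First I would establish the Fourier representation
\[
|\xi|^{\alpha}K_{\alpha}(|\xi|)=\frac{2^{\alpha}\,\Gamma\!\left(\alpha+\frac12\right)}{2\sqrt{\pi}}\int_{\mathbb{R}}e^{-i\xi x}\,(1+x^{2})^{-(\alpha+\frac12)}\,dx .
\]
To derive it I would start from the subordination identity $(1+x^{2})^{-\nu}=\frac{1}{\Gamma(\nu)}\int_{0}^{\infty}s^{\nu-1}e^{-s(1+x^{2})}\,ds$ with $\nu=\alpha+\frac12$, interchange the order of integration (Fubini), evaluate the inner Gaussian integral $\int_{\mathbb{R}}e^{-i\xi x}e^{-sx^{2}}\,dx=\sqrt{\pi/s}\,e^{-\xi^{2}/(4s)}$, and finally recognise the remaining $s$-integral through the classical representation of $K_{\alpha}$ equivalent to (7) and (9), namely $\int_{0}^{\infty}s^{\beta-1}e^{-s-a/s}\,ds=2\,a^{\beta/2}K_{\beta}(2\sqrt{a})$ with $\beta=\alpha$ and $a=\xi^{2}/4$. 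The density $x\mapsto(1+x^{2})^{-(\alpha+\frac12)}$ is continuous, strictly positive, and---precisely because $\alpha>0$---integrable on $\mathbb{R}$; this is exactly where the hypothesis $\alpha>0$ is used, and it is also what makes $|x|^{\alpha}K_{\alpha}(|x|)$ bounded and continuous at the origin (for $\alpha=0$ the function blows up like $-\log|x|$ and the representing measure ceases to be finite). By Bochner's theorem the displayed formula immediately yields $x^{\alpha}K_{\alpha}(x)\in\mathcal{P}$.

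To reach strict positive definiteness I would fix pairwise distinct reals $x_{1},\dots,x_{N}$ and complex $z_{1},\dots,z_{N}$ not all zero, and rewrite the quadratic form as
\[
\sum_{j=1}^{N}\sum_{k=1}^{N}z_{j}\bar{z}_{k}\,|x_{j}-x_{k}|^{\alpha}K_{\alpha}(|x_{j}-x_{k}|)=c_{\alpha}\int_{\mathbb{R}}\Big|\sum_{j=1}^{N}z_{j}e^{-ix_{j}t}\Big|^{2}(1+t^{2})^{-(\alpha+\frac12)}\,dt ,
\]
with $c_{\alpha}>0$. This is manifestly nonnegative, and it vanishes only if the exponential sum $P(t)=\sum_{j}z_{j}e^{-ix_{j}t}$ is zero for almost every $t$ against the density $(1+t^{2})^{-(\alpha+\frac12)}$. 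Since the frequencies $x_{j}$ are distinct and the $z_{j}$ are not all zero, $P$ extends to a nonzero entire function of $t$, so its real zeros are isolated and therefore form a set of Lebesgue measure zero. Because the representing density is strictly positive, the measure it defines charges every set of positive Lebesgue measure; hence $P$ cannot vanish almost everywhere unless $P\equiv0$, i.e. unless all $z_{j}=0$. Consequently the quadratic form is strictly positive, which proves $x^{\alpha}K_{\alpha}(x)\in\mathcal{P}^{s}$.

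The Fubini interchange and the Gaussian evaluation are routine; the one step that deserves genuine care is the passage from $\geq0$ to $>0$. The main obstacle is to justify cleanly that a nonzero finite exponential sum with distinct frequencies has a zero set of Lebesgue measure zero, and that the strict positivity of the density then forces strict positivity of the integral. This is the structural heart of the argument and the place where the strict positivity of the density---rather than mere nonnegativity of the representing measure---is indispensable; a self-contained treatment via this measure-zero observation seems cleaner than invoking a general support criterion.
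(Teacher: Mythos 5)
Your proof is correct, and while its first half coincides in substance with the paper's, the second half takes a genuinely different route. The Fourier representation you derive by subordination is exactly Basset's integral formula, which the paper simply quotes from Watson (p.~172) and feeds into Bochner's theorem to conclude $x^\alpha K_\alpha(x)\in\mathcal{P}$, just as you do. The divergence is in the upgrade to strictness: the paper shows $x^\alpha K_\alpha(x)\in L^1(\mathbb{R})$ (via Bell's integral representation) and then invokes Theorem 6.5 of Derrien's note as a black box (a continuous, integrable, not identically zero positive definite function is strictly positive definite), whereas you argue directly on the quadratic form, writing it as $c_\alpha\int_{\mathbb{R}}\bigl|\sum_j z_j e^{-ix_j t}\bigr|^2(1+t^2)^{-(\alpha+\frac12)}\,dt$ and observing that the density is strictly positive while a nonzero exponential sum with distinct frequencies is a nonzero entire function, so its real zero set has Lebesgue measure zero and the integral cannot vanish. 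Your route is self-contained: it needs neither the integrability of $x^\alpha K_\alpha(x)$ itself nor the external citation, because the explicit strictly positive density does all the work; what the paper's route buys is brevity and a criterion (membership in $L^1$) that remains applicable when no explicit positive density is available. One point to make airtight in your write-up: the assertion that $P(t)=\sum_j z_j e^{-ix_j t}$ is not identically zero when the $x_j$ are distinct and the $z_j$ are not all zero is the linear independence of the characters $e^{-ix_j t}$ --- standard (for instance by a Vandermonde argument on the derivatives of $P$ at $t=0$), but it is precisely the fact on which your strictness argument rests, so it deserves an explicit sentence rather than being folded into ``extends to a nonzero entire function.''
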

 \begin{proof} For $\alpha>0$, the even function $x^\alpha K_\alpha (x)$ admit the Basset's integral representation 
 (\cite{Wa}, p 172):
 \begin{equation}
 \displaystyle x^\alpha K_\alpha (x)=\frac{2^\alpha \Gamma(\alpha+\frac{1}{2})}{\Gamma(\frac{1}{2})}
  \int_0^\infty \frac{\cos (xt)}{(1+t^2)^{\alpha +\frac{1}{2}}} dt, \quad x\in\mathbb{R}.
  \end{equation}
  By Bochner's theorem we have 
  \begin{equation}
  \displaystyle x^\alpha K_\alpha (x)\in \mathcal{P}.
  \end{equation}
Using the Bell integral representation \cite{Be}, we obtain  
 \begin{equation}
 \displaystyle x^\alpha K_\alpha (x)\in L_1.
 \end{equation}
 
 we conclude by theorem 6.5 in \cite{D}
 \end{proof}
\begin{theorem}
1) Let $\varphi$ be a nonnegative function in  $ L_\alpha ^1$  , then
\begin{equation}
\displaystyle \mathcal{F}_\alpha (\varphi) \in  \mathcal{P}.
\end{equation}

\noindent  2)  Let $\varphi$ be a nonnegative continuous function in  $ L_\alpha ^1$ and not identically zero, then
\begin{equation}
\displaystyle \mathcal{F}_\alpha (\varphi) \in  \mathcal{P}^s.
\end{equation}

\end{theorem}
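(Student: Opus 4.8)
The plan is to insert the integral definition (10) of $\mathcal{F}_\alpha$ into the positive definiteness sum and reduce both assertions to pointwise properties of the integrand. Fix pairwise distinct reals $\xi_1,\dots,\xi_N$ and complex numbers $z_1,\dots,z_N$ not all zero; since $j_\alpha$ is even, $\mathcal{F}_\alpha(\varphi)$ is an even continuous function, so that $\mathcal{F}_\alpha(\varphi)(\xi_j-\xi_k)$ makes sense. Interchanging the finite sum with the integral I would write
\begin{equation}
\sum_{j=1}^{N}\sum_{k=1}^{N} z_j\bar z_k\,\mathcal{F}_\alpha(\varphi)(\xi_j-\xi_k)
=c_\alpha\int_0^{+\infty}\varphi(t)\,t^{2\alpha+1}\,\Phi(t)\,dt,\qquad
\Phi(t):=\sum_{j,k} z_j\bar z_k\, j_\alpha\!\big((\xi_j-\xi_k)t\big).
\end{equation}

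For part 1, I would note that for each fixed $t\ge 0$ the dilated map $x\mapsto j_\alpha(tx)$ is positive definite, because $j_\alpha\in\mathcal{P}$ and positive definiteness is invariant under the scaling $x\mapsto tx$. Hence $\Phi(t)\ge 0$ for every $t$, and since $\varphi\ge 0$, $t^{2\alpha+1}\ge 0$ and $c_\alpha>0$, the integrand is nonnegative; the double sum is therefore $\ge 0$, which yields $\mathcal{F}_\alpha(\varphi)\in\mathcal{P}$.

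For part 2 the real work is to make the inequality strict, and this is the step I expect to be the main obstacle. I would substitute the explicit representation (4) of $j_\alpha$ and use the identity $\sum_{j,k} z_j\bar z_k\cos\!\big((\xi_j-\xi_k)u\big)=\big|\sum_j z_j e^{i\xi_j u}\big|^2$ with $u=ts$ to obtain
\begin{equation}
\Phi(t)=\frac{1}{2^{\alpha-1}\sqrt{\pi}\,\Gamma(\alpha+\frac{1}{2})}
\int_0^1 \big(1-s^2\big)^{\alpha-\frac{1}{2}}\Big|\sum_{j=1}^{N} z_j\, e^{i\xi_j t s}\Big|^2\,ds.
\end{equation}
The crucial claim is that $\Phi(t)>0$ for every $t>0$: for such $t$ the map $s\mapsto\sum_j z_j e^{i\xi_j ts}$ is a nonzero exponential polynomial, since its frequencies $\xi_j t$ are pairwise distinct (the $\xi_j$ are distinct and $t\neq 0$) while the $z_j$ are not all zero; it therefore extends to an entire function and vanishes only on a discrete set, whereas the weight $(1-s^2)^{\alpha-\frac{1}{2}}$ is strictly positive on $(0,1)$, so the integral is strictly positive.

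To conclude part 2 I would use continuity of $\varphi$: being nonnegative, continuous and not identically zero, $\varphi$ is strictly positive on some interval $(a,b)\subset(0,+\infty)$, so the integrand $\varphi(t)\,t^{2\alpha+1}\,\Phi(t)$ is strictly positive there and nonnegative elsewhere; the integral, hence the double sum, is then $>0$ and $\mathcal{F}_\alpha(\varphi)\in\mathcal{P}^s$. Everything apart from the nonvanishing of the exponential polynomial is nonnegativity bookkeeping together with the harmless interchange of a finite sum and an integral. As an alternative route for part 2, one could write $\mathcal{F}_\alpha(\varphi)(\xi)=\int_0^{+\infty} j_\alpha(\xi t)\,d\nu(t)$ with $d\nu(t)=c_\alpha\varphi(t)t^{2\alpha+1}\,dt\in M^+$, check that $\hat\nu\in\mathcal{P}^s$ by the same exponential-polynomial argument, and then invoke part 2 of the Proposition on $j_\alpha$.
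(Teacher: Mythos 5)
Your proof of part 1 is the paper's own proof in all but notation: the paper also exchanges the finite double sum with the integral, writing the quadratic form as $\int_0^{+\infty}\sum_{j,k}z_j\sqrt{\varphi(t)}\,\overline{z_k\sqrt{\varphi(t)}}\,j_\alpha\bigl((x_j-x_k)t\bigr)\,d\mu_\alpha(t)$, and uses that each dilate $x\mapsto j_\alpha(tx)$ inherits positive definiteness from $j_\alpha\in\mathcal{P}$, so the integrand is pointwise nonnegative.

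For part 2 you take a genuinely different route. The paper stays abstract: by Bochner's theorem $\mathcal{F}_\alpha(\varphi)=\hat\mu$ with $\mu\in M^+$ and $\hat\mu(0)>0$; by the Riemann--Lebesgue lemma $\mathcal{F}_\alpha(\varphi)$ vanishes at infinity; it then asserts that the support of $\mu$ must therefore contain an interior point and concludes by citing Theorem 6.8 of Wendland's book. Your argument is self-contained: you exhibit the quadratic form as an integral against the strictly positive weight $(1-s^2)^{\alpha-\frac{1}{2}}$ and reduce strictness to the nonvanishing of an exponential polynomial with distinct frequencies, i.e.\ to an analyticity statement. This buys two things: no appeal to Wendland's measure-theoretic criterion, and no reliance on the paper's delicate (and, as stated, unjustified) inference that decay at infinity forces the Bochner measure to have support with nonempty interior --- decay at infinity alone does not imply that in general, so your route is arguably on firmer ground than the paper's.

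There is, however, one error to repair: the identity $\sum_{j,k}z_j\bar z_k\cos\bigl((\xi_j-\xi_k)u\bigr)=\bigl|\sum_j z_je^{i\xi_ju}\bigr|^2$ is false for complex coefficients. Take $z_1=1$, $z_2=i$, $\xi_1=0$, $\xi_2=1$: the left side equals $2$, the right side equals $2-2\sin u$. The correct expansion, obtained from $\cos\bigl((\xi_j-\xi_k)u\bigr)=\cos(\xi_ju)\cos(\xi_ku)+\sin(\xi_ju)\sin(\xi_ku)$, is
\[
\sum_{j,k}z_j\bar z_k\cos\bigl((\xi_j-\xi_k)u\bigr)
=\Bigl|\sum_{j}z_j\cos(\xi_ju)\Bigr|^2+\Bigl|\sum_{j}z_j\sin(\xi_ju)\Bigr|^2 .
\]
The repair costs nothing: the expression is still nonnegative, and if it vanished for all $s$ in $(0,1)$ (as it must if the integral defining $\Phi(t)$ were zero, the integrand being continuous and nonnegative), then the entire functions $s\mapsto\sum_jz_j\cos(\xi_jts)$ and $s\mapsto\sum_jz_j\sin(\xi_jts)$ would vanish identically, hence so would their combination $\sum_jz_je^{i\xi_jts}$, contradicting the linear independence of exponentials with the pairwise distinct frequencies $\xi_jt$, $t>0$, and coefficients not all zero. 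With this correction your proof of part 2 is complete.
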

\begin{proof}
1) Let $x_1, x_2,...,x_n$ are reals numbers and $z_1, z_2,...,z_n$ are complexes numbers , we have 
$$\displaystyle  \sum_{j=1}^{n} \sum_{k=1}^{n} z_j \overline{z_k}\mathcal{F}_\alpha (\varphi) (x_j-x_k) =
\int_0^{+\infty} \sum_{j=1}^{n} \sum_{k=1}^{n} z_j\sqrt{\varphi (t)} \overline{(z_k \sqrt{\varphi (t)})} 
 j_\alpha (x_jt-x_kt) d\mu_\alpha (t).$$
Since $j_\alpha \in \mathcal{P}$, we  conclude.\\

\noindent 2) By 1), we have $\displaystyle \mathcal{F}_\alpha (\varphi) \in  \mathcal{P}$. Bochner's theorem 
asserts that there exists a nonnegative  finite Borel measure $\mu$ such that 
$\displaystyle \mathcal{F}_\alpha (\varphi)=\hat{\mu}$, with $\hat{\mu}(0)=\mathcal{F}_\alpha (\varphi)(0)>0$.
 By the Riemann-Lebesgue lemma, we have $\displaystyle \lim_{\left|\xi\right|\leftarrow \infty}\mathcal{F}_\alpha (\varphi)(\xi)=0$. Then the support of $\mu$ must contain an interior point and hence by theorem 6.8 in \cite{We}, 
  we conclude . 
 
\end{proof}

\noindent {\bf Example 1:} For $0<\alpha <\beta$ and $a>0$, the even function
\begin{equation} 
\displaystyle \phi_{\alpha,\beta}(x)=\frac{a^{\alpha-\beta}}{2^\alpha \Gamma(\alpha+1)} x^{\beta-\alpha} K_{\beta-\alpha} (ax)
 \in \mathcal{P}^s,
 \end{equation}
  where $K_\alpha$ is the modified Bessel function of second kind.\\

\noindent {\bf Proof :} For $0<\alpha<\beta$ and $a>0$, we put
$$\displaystyle \varphi_\beta (x)=\frac{1}{\left(x^2+a^2\right)^{\beta +1}}.$$
Then, $\displaystyle \varphi_\beta \in L_\alpha ^1$ is continuous positive function. By (\cite{An}, p 254), we have:
$$\displaystyle \mathcal{F}_\alpha (\varphi_\beta)(x)=\frac{1}{2^\alpha \Gamma(\alpha+1)}
\int_0^\infty \frac{1}{(t^2+a^2)^{\beta +1}}  j_\alpha(xt)t^{2\alpha+1} dt =\frac{a^{\alpha-\beta}}{2^\alpha \Gamma(\alpha+1)} x^{\beta-\alpha} K_{\beta-\alpha} (ax). $$
Thus, the proof is done by the last theorem.\\

\noindent {\bf Example 2:} For $-1<\alpha<\beta$ and $a\neq 0$, the function 
\begin{equation}
\displaystyle \phi_{\alpha,\beta}(x)=a^{\beta+2} {_1F_1}(1+\frac{\beta}{2};\alpha +1; -\frac{1}{4}a^2x^2)
\in \mathcal{P}^s,
\end{equation}
 where $_1F_1$ is the hypergeometric function.\\
\noindent {\bf Proof :}
For $-1<\alpha<\beta$ and $a\neq 0$, we consider the even function 
$$\displaystyle \varphi_{\alpha,\beta} (x)=x^{\beta-\alpha}  e^{-\frac{x^2}{a^2}}, \quad x\geq  0.$$
 Then, $\varphi_{\alpha,\beta} \in  L_\alpha ^1 $ is continuous, nonnegative function 
  and
$$\displaystyle \phi_{\alpha,\beta}(x)= \mathcal{F}_\alpha (\varphi_{\alpha,\beta})(x),$$
hence the proof is done by the last theorem.\\
\begin{coro}
 For $\alpha \geq -\frac{1}{2}$,  let $\varphi \in L_\alpha ^1$  be a nonnegative function, then 
\begin{equation}
\mathcal{F}_\alpha (T_x\varphi)\in \mathcal{P}.
\end{equation}

\end{coro}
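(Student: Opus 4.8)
The plan is to deduce the statement directly from the preceding theorem, whose part 1 asserts that $\mathcal{F}_\alpha(\psi) \in \mathcal{P}$ for any nonnegative $\psi \in L_\alpha^1$. It therefore suffices to show that the translate $T_x^\alpha\varphi$ is itself a nonnegative member of $L_\alpha^1$, and then to apply that theorem with $\psi = T_x^\alpha\varphi$.

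First I would verify nonnegativity. Writing the Bessel translation through its defining integral $T_x^\alpha\varphi(y) = c_\alpha\int_0^{+\infty}\varphi(t)D(x,y,t)t^{2\alpha+1}dt$, the kernel $D(x,y,t)$ is a positive constant multiplied by $[\Delta(x,y,t)]^{2\alpha-1}/(xyt)^{2\alpha}$, where $\Delta(x,y,t)$ is a triangle area and hence nonnegative. Thus $D \geq 0$, and since $\varphi \geq 0$ the whole integrand is nonnegative, giving $T_x^\alpha\varphi \geq 0$. For continuous $\varphi$ the same conclusion is even more transparent from the representation $T_x^\alpha\varphi(y) = \frac{\Gamma(\alpha+1)}{\sqrt{\pi}\Gamma(\alpha+\frac{1}{2})}\int_0^\pi \varphi(\sqrt{x^2+y^2+2xy\cos\theta})(\sin\theta)^{2\alpha}d\theta$, since $(\sin\theta)^{2\alpha} \geq 0$ on $[0,\pi]$.

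Next I would record integrability: Theorem 2, part 2, applied with $p=1$ gives $T_x^\alpha\varphi \in L_\alpha^1$ together with $\|T_x^\alpha\varphi\| \leq \|\varphi\|_1$. With both properties in hand, the preceding theorem applies to $\psi = T_x^\alpha\varphi$ and yields $\mathcal{F}_\alpha(T_x^\alpha\varphi) \in \mathcal{P}$, which is exactly the assertion; continuity of the transform, required in the definition of $\mathcal{P}$, is automatic since $\mathcal{F}_\alpha$ maps $L_\alpha^1$ into $C_0$ by Theorem 1.

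I expect the only point needing care to be the sign of the kernel in the range $-\frac{1}{2} \leq \alpha < \frac{1}{2}$, where the exponent $2\alpha-1$ is negative. This is not a genuine obstruction: $[\Delta(x,y,t)]^{2\alpha-1}$ remains nonnegative wherever it is defined, and in the borderline situations one falls back on the equivalent representation carrying the factor $(\sin\theta)^{2\alpha}$, which is manifestly nonnegative. An alternative route, should one prefer to bypass the translation kernel, is to factor $\mathcal{F}_\alpha(T_x^\alpha\varphi)(y) = j_\alpha(xy)\mathcal{F}_\alpha(\varphi)(y)$ and combine $j_\alpha \in \mathcal{P}$ (hence its dilation $j_\alpha(x\,\cdot) \in \mathcal{P}$, since real nodes stay real under scaling) with $\mathcal{F}_\alpha(\varphi) \in \mathcal{P}$ through the Schur product theorem, which guarantees that a pointwise product of positive definite functions is again positive definite.
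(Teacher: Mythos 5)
Your argument is correct, but your primary route is genuinely different from the paper's; in fact, the paper's proof is exactly the alternative you sketch in your closing sentences. The paper simply writes $\mathcal{F}_\alpha (T_x^\alpha\varphi)=j_\alpha (x\,\cdot)\,\mathcal{F}_\alpha (\varphi)$, cites $j_\alpha\in\mathcal{P}$ together with the preceding theorem (so $\mathcal{F}_\alpha(\varphi)\in\mathcal{P}$), and concludes --- implicitly using that $\mathcal{P}$ is stable under dilations and under pointwise products (Schur's theorem), neither of which the paper states or proves. Your main argument instead feeds the translate itself into the preceding theorem, taking $\psi=T_x^\alpha\varphi$ after verifying its two hypotheses: nonnegativity, from the positivity of the kernel $D(x,y,t)$ (a power of a triangle area, which stays nonnegative even when the exponent $2\alpha-1$ is negative), and membership in $L_\alpha^1$, from the contraction property of $T_x^\alpha$ on $L_\alpha^p$ with $p=1$. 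What each approach buys: yours is self-contained on the positive-definiteness side, requiring no closure properties of $\mathcal{P}$ beyond the theorem already proved, at the price of inspecting the structure of the translation kernel; the paper's is a one-line deduction from the identity $\mathcal{F}_\alpha (T_x^\alpha f)(y)=j_\alpha(xy)\mathcal{F}_\alpha(f)(y)$, but rests on those two unstated closure facts about $\mathcal{P}$. One caveat common to both proofs: at the endpoint $\alpha=-\frac{1}{2}$, which the corollary allows, both the kernel formula for $D(x,y,t)$ and the representation carrying $(\sin\theta)^{2\alpha}$ degenerate because of the factor $\Gamma(\alpha+\frac{1}{2})$; there the Bessel translation reduces to the classical even translation $T_xf(y)=\frac{1}{2}\left[f(x+y)+f(\left|x-y\right|)\right]$, which is manifestly positivity preserving, so your argument (and likewise the paper's, with $j_{-1/2}=\cos$ positive definite) still closes.
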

\begin{proof} We have:
$$\displaystyle  \mathcal{F}_\alpha (T_x\varphi)=j_\alpha (x.)\mathcal{F}_\alpha (\varphi).$$
Since $j_\alpha \in \mathcal{P}$, the last theorem complete the proof.
\end{proof}

\noindent We consider the Wiener algebra
\begin{equation}
\displaystyle \mathcal{A}_\alpha =\left\{f\in L_\alpha^1 / \mathcal{F}_\alpha(f) \in  L_\alpha^1\right\}.
\end{equation}

\begin{theorem}
Let $\displaystyle \varphi \in \mathcal{A}_\alpha \cap C$, 
 $\displaystyle \mathcal{F}_\alpha (\varphi )\geq 0$ and $\varphi$ non vanishing.
 Then $\displaystyle \varphi \in \mathcal{P}^s$.

\end{theorem}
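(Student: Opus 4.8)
The plan is to turn the hypothesis around by means of the Fourier--Bessel inversion formula, so that $\varphi$ itself is exhibited as the Fourier--Bessel transform of a nonnegative function, and then to quote the previous theorem. Recall that the Fourier--Bessel (Hankel) transform is, up to its normalisation, an involution on $L_\alpha^2$, and that the inversion identity $\mathcal{F}_\alpha(\mathcal{F}_\alpha(f))=f$ holds whenever both $f$ and $\mathcal{F}_\alpha(f)$ lie in $L_\alpha^1$; since here $\varphi$ is moreover continuous, this identity will hold everywhere and not merely almost everywhere.

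First I would set $\psi=\mathcal{F}_\alpha(\varphi)$. Because $\varphi\in\mathcal{A}_\alpha$, both $\varphi$ and $\psi$ belong to $L_\alpha^1$, so the inversion formula applies and yields $\varphi=\mathcal{F}_\alpha(\psi)$. Thus $\varphi$ is realised as the Fourier--Bessel transform of $\psi$, and it remains only to check that $\psi$ satisfies the hypotheses of part 2) of the preceding theorem.

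Next I would verify these hypotheses for $\psi$. By assumption $\psi=\mathcal{F}_\alpha(\varphi)\geq 0$; since $\varphi\in\mathcal{A}_\alpha\subset L_\alpha^1$, part 2) of Theorem \ref{t1} gives $\mathcal{F}_\alpha(\varphi)\in C_0$, so $\psi$ is continuous; and $\psi\in L_\alpha^1$ again because $\varphi\in\mathcal{A}_\alpha$. Finally $\psi$ is not identically zero, for if $\psi\equiv 0$ then $\varphi=\mathcal{F}_\alpha(\psi)=0$, contradicting the assumption that $\varphi$ is non-vanishing. With all the hypotheses in force, the previous theorem yields $\varphi=\mathcal{F}_\alpha(\psi)\in\mathcal{P}^s$, as desired.

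The one point that genuinely requires care, and which I expect to be the main obstacle, is the justification of the pointwise inversion identity $\varphi=\mathcal{F}_\alpha(\mathcal{F}_\alpha(\varphi))$: the $L^2$-theory furnished by the Plancherel identity of Theorem \ref{t1} only delivers it almost everywhere, and one must combine the Wiener-algebra membership $\varphi\in\mathcal{A}_\alpha$ with the continuity of $\varphi$ (and of $\mathcal{F}_\alpha(\varphi)\in C_0$) to upgrade it to an everywhere identity. Everything else is a direct bookkeeping check of the three conditions of the preceding theorem.
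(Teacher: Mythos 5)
Your proposal is correct and takes essentially the same route as the paper: both arguments hinge on the Fourier--Bessel inversion formula $\varphi=\mathcal{F}_\alpha\left(\mathcal{F}_\alpha(\varphi)\right)$, which exhibits $\varphi$ as the transform of the nonnegative integrable function $\mathcal{F}_\alpha(\varphi)$, and then invoke the preceding theorem. If anything, yours is the more complete rendition: the paper deduces $\varphi\in\mathcal{P}$ and then jumps to $\mathcal{P}^s$ from ``$\varphi\in L_\alpha^1$ and non vanishing'' (implicitly an external strictness criterion), whereas you verify all four hypotheses of part 2) of the preceding theorem for $\mathcal{F}_\alpha(\varphi)$ --- nonnegativity, continuity via $\mathcal{F}_\alpha(\varphi)\in C_0$, integrability from the Wiener-algebra assumption, and non-triviality via inversion --- and apply it directly.
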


\begin{proof}
Let $\displaystyle \varphi \in \mathcal{A}_\alpha \cap C$, by the inversion formula we have 
$$\displaystyle \varphi =\mathcal{F}_\alpha \left(\mathcal{F}_\alpha (\varphi)\right).$$
Since   $\displaystyle \mathcal{F}_\alpha (\varphi )\geq 0$, then $\varphi \in \mathcal{P}$. Moreover 
$\displaystyle \varphi \in L_\alpha^1$ and $\varphi $ non vanishing , then $\varphi \in \mathcal{P}^s$.\\

\end{proof}

\begin{coro}
Let $\displaystyle \varphi \in \mathcal{A}_\alpha \cap C$, 
 $\displaystyle \mathcal{F}_\alpha (\varphi )\geq 0$ and $\varphi$ non vanishing.
 Then 
 \begin{equation}
 \displaystyle T^t\varphi =E_t*_\alpha \varphi\in \mathcal{P}^s
\end{equation}
and 
 \begin{equation}
 \displaystyle P^t\varphi =p_t*_\alpha \varphi\in \mathcal{P}^s,
\end{equation}
where $E_t$ and $p_t$ are respectively the Gauss and the Poisson kernels associated to the Bessel operator given by:
\begin{equation}
\displaystyle E_t(x)=\frac{e^{-\frac{x^2}{4t}}}{(2t)^{\alpha+1}},
\end{equation}

\begin{equation}
\displaystyle p_t(x)=\frac{2^{\alpha+1}\Gamma(\alpha+\frac{3}{2})}{\sqrt{\pi}}\frac{t}{(t^2+x^2)^{\Gamma(\alpha+\frac{3}{2})}}.
\end{equation}
\end{coro}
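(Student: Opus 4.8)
The plan is to deduce this corollary directly from the preceding Theorem, applied in turn to the two functions $T^t\varphi = E_t *_\alpha \varphi$ and $P^t\varphi = p_t *_\alpha \varphi$. The whole argument then reduces to checking, for each of these, the three hypotheses of that Theorem: membership in $\mathcal{A}_\alpha \cap C$, nonnegativity of the Fourier-Bessel transform, and non-vanishing (i.e. not identically zero).

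First I would record the Fourier-Bessel transforms of the two kernels. A direct computation, or equivalently the standard heat and Poisson semigroup formulas for the Bessel operator found in \cite{Fi} and \cite{Bek}, gives
$$\mathcal{F}_\alpha(E_t)(\xi) = e^{-t\xi^2}, \qquad \mathcal{F}_\alpha(p_t)(\xi) = e^{-t\xi}.$$
For $E_t$ this rests on the self-reciprocity of the Gaussian under the Hankel transform, namely $c_\alpha \int_0^{+\infty} e^{-x^2/(4t)} j_\alpha(\xi x) x^{2\alpha+1}\,dx = (2t)^{\alpha+1} e^{-t\xi^2}$, and for $p_t$ it is the subordination formula. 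The only feature I will actually use is that both transforms are strictly positive on $[0,+\infty[$.

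Next, by the convolution identity $\mathcal{F}_\alpha(f *_\alpha g) = \mathcal{F}_\alpha(f)\,\mathcal{F}_\alpha(g)$ of Proposition 1 together with these formulas,
$$\mathcal{F}_\alpha(T^t\varphi)(\xi) = e^{-t\xi^2}\,\mathcal{F}_\alpha(\varphi)(\xi) \geq 0, \qquad \mathcal{F}_\alpha(P^t\varphi)(\xi) = e^{-t\xi}\,\mathcal{F}_\alpha(\varphi)(\xi) \geq 0,$$
since $\mathcal{F}_\alpha(\varphi) \geq 0$ by hypothesis. Membership in $\mathcal{A}_\alpha$ I would check from the convolution inequality $\|E_t *_\alpha \varphi\|_1 \leq \|E_t\|_1 \|\varphi\|_1 < \infty$ (and similarly for $p_t$), while $\mathcal{F}_\alpha(T^t\varphi)(\xi) = e^{-t\xi^2}\mathcal{F}_\alpha(\varphi)(\xi) \in L_\alpha^1$ because $\mathcal{F}_\alpha(\varphi) \in L_\alpha^1$ and $e^{-t\xi^2}$ is bounded. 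Continuity then follows from the inversion formula $T^t\varphi = \mathcal{F}_\alpha(\mathcal{F}_\alpha(T^t\varphi))$, whose right-hand side lies in $C_0 \subset C$ since $\mathcal{F}_\alpha(T^t\varphi) \in L_\alpha^1$. Finally, because $e^{-t\xi^2} > 0$ everywhere and $\mathcal{F}_\alpha(\varphi)$ is not identically zero (as $\varphi$ is non-vanishing), the product $\mathcal{F}_\alpha(T^t\varphi)$ is not identically zero, so $T^t\varphi$ is non-vanishing; the same reasoning applies verbatim to $P^t\varphi$.

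With all three hypotheses verified, the preceding Theorem yields $T^t\varphi \in \mathcal{P}^s$ and $P^t\varphi \in \mathcal{P}^s$, as claimed. I expect the genuine obstacle to be the first step: establishing the positivity, indeed the explicit exponential form, of $\mathcal{F}_\alpha(E_t)$ and $\mathcal{F}_\alpha(p_t)$. Once these are in hand, everything else is a routine verification carried out through the convolution theorem and the inversion formula.
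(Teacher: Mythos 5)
Your proof is correct and follows exactly the route the paper intends: the paper in fact states this corollary \emph{without any proof}, leaving it as an immediate consequence of the preceding theorem, and your argument supplies precisely the missing verification --- the kernel transforms $\mathcal{F}_\alpha(E_t)(\xi)=e^{-t\xi^2}$ and $\mathcal{F}_\alpha(p_t)(\xi)=e^{-t\xi}$ (both checkable from the Hankel self-reciprocity of the Gaussian and the formula in Example 1 with $K_{1/2}(z)=\sqrt{\pi/(2z)}\,e^{-z}$), the convolution identity of Proposition 1, membership of $E_t*_\alpha\varphi$ and $p_t*_\alpha\varphi$ in $\mathcal{A}_\alpha\cap C$ via Young's inequality and the inversion formula, and non-vanishing via injectivity of $\mathcal{F}_\alpha$. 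Your reading of ``non vanishing'' as ``not identically zero'' is the one consistent with how the theorem's own proof uses that hypothesis, so the application is legitimate; if anything, your write-up is more complete than the source.
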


\section{Relation between strictly positive definite function and logarithmically completely monotonic functions}
\subsection{Definitions and properties:}
\begin{defin} A function $f$ is said to be completely monotonic (CM) on an interval $I\subset\mathbb{R}$
 if $f\in C(I)$ has derivatives of all orders on $I^0$ and, for all $n\in \mathbb{N}$,\\
\begin{equation}
 \displaystyle (-1)^n f^{(n)}(x)\geq 0, \quad x\in I^0;\quad  n\in \mathbb{N}.
 \end{equation}
 
\noindent The class of all  completely monotonic functions on  $I$ is denoted by $CM(I)$.
\end{defin}

\begin{lemme}
The sum and the product of completely monotonic functions are also.
\end{lemme}
\begin{defin}
A function $f$ is said to be logarithmically completely monotonic (LCM) on an interval $I\subset\mathbb{R}$
 if 
 $$f>0, \quad f\in C(I),$$ 
 has derivatives of all orders on $I^0$ and, for all $n\in \mathbb{N}\setminus \{0\}$,\\
 
 \begin{equation}
 \displaystyle (-1)^n [{\rm ln} f(x)]^{(n)}\geq 0, \quad x\in I^0;\quad  n\geq 1.
 \end{equation}
 
\noindent The class of all logarithmically completely monotonic on  $I$ is denoted by $LCM(I)$.

\end{defin}
\begin{propo}
\begin{equation}
LCM(I)\subset CM(I) .
\end{equation}

\end{propo}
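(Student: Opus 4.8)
The plan is to prove the inclusion by a single induction showing that if $f \in LCM(I)$ then $(-1)^n f^{(n)}(x) \geq 0$ for every $n \in \mathbb{N}$ and every $x \in I^0$; together with the continuity $f \in C(I)$ already built into the definition of $LCM(I)$, this is exactly the assertion $f \in CM(I)$. The engine of the argument is the elementary identity for the logarithmic derivative. Writing $g = \ln f$, which is well defined and infinitely differentiable on $I^0$ because $f > 0$ there, I would start from $f' = f\,g'$, equivalently $f^{(n)} = (f g')^{(n-1)}$ for every $n \geq 1$.

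First I would record the base case: for $n = 0$ we have $f > 0$, hence $(-1)^0 f = f \geq 0$. For the inductive step I assume $(-1)^j f^{(j)}(x) \geq 0$ for all $0 \leq j \leq n-1$ and apply the Leibniz rule to $f^{(n)} = (f g')^{(n-1)}$, which gives
\begin{equation}
f^{(n)} = \sum_{k=0}^{n-1} \binom{n-1}{k} f^{(k)}\, g^{(n-k)}.
\end{equation}

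The key step is the parity bookkeeping. Multiplying by $(-1)^n$ and splitting $(-1)^n = (-1)^k (-1)^{n-k}$ yields
\begin{equation}
(-1)^n f^{(n)} = \sum_{k=0}^{n-1} \binom{n-1}{k} \bigl[(-1)^k f^{(k)}\bigr]\,\bigl[(-1)^{n-k} g^{(n-k)}\bigr].
\end{equation}
For each index in this sum we have $0 \leq k \leq n-1$, so $(-1)^k f^{(k)} \geq 0$ by the induction hypothesis; moreover $n - k \geq 1$, so $(-1)^{n-k} g^{(n-k)} = (-1)^{n-k}[\ln f]^{(n-k)} \geq 0$ by the defining $LCM$ inequality. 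Since the binomial coefficients are positive, every summand is a product of nonnegative quantities, whence $(-1)^n f^{(n)} \geq 0$ and the induction closes.

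There is no genuine obstacle here beyond keeping the signs straight; the entire content is that the $LCM$ hypothesis supplies precisely the factor $(-1)^{n-k} g^{(n-k)} \geq 0$ that is needed, and it does so exactly because $n - k \geq 1$, so one never appeals to the sign of $g$ itself (which is unconstrained). The induction furnishes the complementary factor $(-1)^k f^{(k)} \geq 0$. I would also remark that the reverse inclusion fails, so the containment is strict; that need not be established here, but it justifies phrasing the statement as $LCM(I) \subset CM(I)$ rather than as an equality.
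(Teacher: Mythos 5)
Your proof is correct. Be aware, though, that the paper itself offers no proof of this proposition at all: it is stated bare (and asserted once more in the introduction as a known fact), so there is no argument of the authors' to compare yours against --- what you have written fills a gap rather than duplicates or diverges from an existing proof. Your strong induction combined with the Leibniz rule applied to $f^{(n)} = (f\,g')^{(n-1)}$, $g = \ln f$, is the standard argument for this inclusion, and your sign bookkeeping is handled exactly right: the $LCM$ hypothesis $(-1)^{m}[\ln f]^{(m)} \geq 0$ is only ever invoked for $m = n-k \geq 1$, while the induction hypothesis supplies $(-1)^k f^{(k)} \geq 0$ for $0 \leq k \leq n-1$, so no term requires information about the (unconstrained) sign of $\ln f$ itself. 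It is also worth noting that your argument is structurally the same Leibniz-plus-induction device the paper does deploy later, in its proof of Theorem 9 (that $f \in C(I)$, $f > 0$, $f' \in CM(I)$ imply $1/f \in LCM(I)$); in that sense your proof is very much in the paper's own style. The only point you could make explicit is that the existence of derivatives of all orders of $f$ on $I^0$ (hence of $\ln f$, since $f > 0$) is part of the paper's definition of $LCM(I)$, which is what licenses forming $g^{(m)}$ in the first place --- a one-line remark, not a gap.
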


\begin{defin}
A function $\varphi: [0, \infty )\rightarrow \mathbb{R}$ is said to be positive definite on  $\mathbb{R}^d$ 
if the corresponding multivariate function  $\phi := \varphi (\left\|.\right\|_2^2)$ is positive definite on every $\mathbb{R}^d$.
\end{defin}

\noindent In \cite{Sc} Schoenberg establish the connexion between positive definite radial and completely 
monotone functions.\\

 \begin{theorem} {\rm({\bf Schoenberg})}
 A function $\varphi$ is completely monotone on $[0, \infty )$ if and only if 
 $\phi := \varphi (\left\|.\right\|_2^2)$ is positive definite on every $\mathbb{R}^d$.
 \end{theorem}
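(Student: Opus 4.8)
The plan is to prove both implications, concentrating the real work in the converse direction. For the direct implication, suppose $\varphi$ is completely monotone on $[0,\infty[$. By Bernstein's theorem \cite{Be} there is a measure $\mu\in M^+$ supported on $[0,\infty[$ with $\varphi(s)=\int_0^{+\infty} e^{-st}\,d\mu(t)$, so substituting $s=\left\|x\right\|_2^2$ gives
$$\phi(x)=\int_0^{+\infty} e^{-t\left\|x\right\|_2^2}\,d\mu(t).$$
For each fixed $t\geq 0$ the Gaussian $x\mapsto e^{-t\left\|x\right\|_2^2}$ is positive definite on every $\R^d$, since its Fourier transform is again a nonnegative Gaussian and Bochner's theorem \cite{Bo} applies. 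As the class of positive definite functions is closed under integration against a nonnegative measure (a superposition of positive definite functions is positive definite), $\phi$ is positive definite on every $\R^d$.

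For the converse, assume $\phi=\varphi(\left\|\cdot\right\|_2^2)$ is positive definite on every $\R^d$; the goal is to recover the Gaussian mixture representation above, after which Bernstein's theorem finishes the proof. First I would fix $d$ and combine Bochner's theorem with radial symmetry: a continuous radial positive definite function on $\R^d$ admits a representation
$$\varphi(\left\|x\right\|_2^2)=\int_0^{+\infty} j_{\frac{d-2}{2}}(\left\|x\right\|_2\, r)\,d\nu_d(r),$$
where $j_{\frac{d-2}{2}}$ is the normalized Bessel function of Section 2 and $\nu_d\in M^+$. This follows from averaging the character $e^{i\langle x,\xi\rangle}$ in Bochner's representation over spheres, which produces exactly the normalized Bessel function of index $\frac{d-2}{2}$.

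The heart of the argument is the limit $d\to\infty$. The key analytic fact is the Gaussian limit of normalized Bessel functions, $\lim_{\alpha\to+\infty} j_\alpha(2\sqrt{\alpha}\,u)=e^{-u^2}$, which one reads off directly from the series (2) upon noting that $\Gamma(\alpha+1)/\Gamma(n+\alpha+1)\sim\alpha^{-n}$, so that each coefficient tends to that of $e^{-u^2}$. After the rescaling $r\mapsto r/(2\sqrt{\alpha})$ this turns the Bessel kernel into a Gaussian kernel. I would rescale the measures $\nu_d$ accordingly and, using the uniform bound $\left|j_\alpha\right|\leq 1$ of (3) to control tightness, apply a Helly-type selection argument to extract a weak limit $\mu$ of the rescaled measures. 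Passing to the limit in the integral representation yields
$$\varphi(\left\|x\right\|_2^2)=\int_0^{+\infty} e^{-t\left\|x\right\|_2^2}\,d\mu(t),$$
whence $\varphi(s)=\int_0^{+\infty} e^{-st}\,d\mu(t)$ with $\mu\in M^+$, and Bernstein's theorem \cite{Be} gives that $\varphi$ is completely monotone on $[0,\infty[$.

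The main obstacle is precisely this passage to the limit: one must establish the Bessel-to-Gaussian asymptotics with enough uniformity on compact sets, and simultaneously guarantee that the rescaled measures $\nu_d$ do not leak mass to infinity, so that the limiting $\mu$ is finite and the Gaussian representation survives the limit. Everything else—the direct implication and the two reductions to Bochner's and Bernstein's theorems—is routine.
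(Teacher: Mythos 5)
The paper does not prove this theorem at all: it is stated as a classical result imported from Schoenberg \cite{Sc} (see also \cite{We}), so there is no internal proof to compare yours against. Your outline is, in substance, Schoenberg's original 1938 argument --- radial Bochner representation with kernel $j_{(d-2)/2}$, the Gaussian limit of rescaled Bessel functions, a selection argument for the measures, then Bernstein --- and the forward direction as you give it (Bernstein representation, positive definiteness of Gaussians, closure of $\mathcal{P}$ under superposition) is complete and correct.

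The converse, however, is not closed by your sketch: the two ``obstacles'' you flag at the end are not peripheral, they are the entire content of the hard direction, and one of them cannot be handled the way you propose. The bound $|j_\alpha|\le 1$ of (3) gives \emph{no} control of tightness: it does not decay, so it says nothing about the tail $\int_T^\infty j_\alpha(2\sqrt{\alpha}\,t\|x\|)\,d\nu_\alpha(t)$, and since $j_\alpha$ changes sign you cannot discard that tail by positivity either. What rescues the argument is a decay estimate for the rescaled kernel that is \emph{uniform in} $\alpha$: from the identity $\frac{d}{dy}\bigl(y^{-\alpha}J_\alpha(y)\bigr)=-y^{-\alpha}J_{\alpha+1}(y)$ (Watson \cite{Wa}), the function $j_\alpha$ is positive and decreasing on $[0,\,j_{\alpha+1,1}]$, an interval containing $[0,\alpha+1]$; while for $y\ge \alpha+1$ one has the crude bound $|j_\alpha(y)|\le 2^\alpha\Gamma(\alpha+1)(\alpha+1)^{-\alpha}$, which tends to zero superexponentially by Stirling's formula. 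Together these give $\limsup_{\alpha\to\infty}\,\sup_{u\ge T}\,|j_\alpha(2\sqrt{\alpha}\,u)|\le e^{-T^2}$. With this estimate, mere vague (Helly) convergence of the rescaled measures suffices: any mass escaping to infinity contributes nothing in the limit for $x\ne 0$, so $\varphi(s)=\int_0^\infty e^{-st}\,d\mu(t)$ for $s>0$, and continuity of $\varphi$ at $0$ plus monotone convergence shows a posteriori that $\mu$ has full mass $\varphi(0)$, after which Bernstein's theorem applies. One smaller repair: coefficientwise convergence of the series (2) is not by itself ``reading off'' uniform convergence on compact sets; you need the domination $0\le \Gamma(\alpha+1)\alpha^n/\Gamma(n+\alpha+1)\le 1$ to sum the errors term by term. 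With these two points supplied, your outline becomes a complete proof.
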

 
\noindent In \cite{We} Wendland  was interested by strictly positive definite functions and present a complete
 characterization of radial functions as being strictly positive definite on every $\mathbb{R}^d$.
\begin{theorem}\label{W} {\rm({\bf Wendland })}
For a function $\varphi: [0, \infty )\rightarrow \mathbb{R}$ the following three properties are equivalent\\
\noindent (1)  $\varphi$ is strictly positive definite on every $\mathbb{R}^d$;\\
(2)  $\varphi (\sqrt{.})$ is completely monotone on $[0, \infty )$ and non constant;\\
(3) there exists a finite nonnegative Borel measure $\nu$ on $[0, \infty )$ that is non concentrated at zero, 
such  that
$$\varphi (r)=\int_0^\infty e^{-r^2 t} d\nu(t).$$

\end{theorem}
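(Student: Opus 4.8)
The plan is to close the cycle $(1)\Rightarrow(2)$, $(2)\Leftrightarrow(3)$ and $(3)\Rightarrow(1)$, using Bernstein's theorem and Schoenberg's theorem (both available above) and building the analytic core around the Gaussian kernel. Throughout I write $\Phi(x)=\varphi(\|x\|_2)$ for the radial function attached to $\varphi$, and I set $g=\varphi(\sqrt{\cdot})$, so that $\varphi(r)=g(r^2)$ and $\Phi(x)=g(\|x\|_2^2)$.

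The equivalence $(2)\Leftrightarrow(3)$ is Bernstein's theorem applied to $g$. Indeed $g=\varphi(\sqrt{\cdot})$ is completely monotone on $[0,\infty)$ if and only if it is the Laplace transform of a finite nonnegative Borel measure $\nu$ on $[0,\infty)$, i.e. $g(s)=\int_0^\infty e^{-st}\,d\nu(t)$; putting $s=r^2$ gives exactly $\varphi(r)=\int_0^\infty e^{-r^2t}\,d\nu(t)$. Because the representing measure is unique, $g$ is constant if and only if $\nu=c\,\delta_0$, so the clause ``$\varphi(\sqrt{\cdot})$ non constant'' in $(2)$ corresponds precisely to ``$\nu$ non concentrated at zero'' in $(3)$.

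Next I would prove $(3)\Rightarrow(1)$, which I expect to be the heart of the argument. The representation in $(3)$ writes $\Phi$ as a superposition of Gaussians, and for distinct $x_1,\dots,x_N\in\mathbb{R}^d$ and scalars $c_1,\dots,c_N$,
$$\sum_{j=1}^N \sum_{k=1}^N c_j \bar{c_k}\, \varphi(\|x_j - x_k\|_2) = \int_0^\infty \Bigl( \sum_{j=1}^N \sum_{k=1}^N c_j \bar{c_k}\, e^{-t\|x_j - x_k\|_2^2} \Bigr) d\nu(t).$$
The key point is that for each fixed $t>0$ the Gaussian $x\mapsto e^{-t\|x\|_2^2}$ is strictly positive definite on $\mathbb{R}^d$: its Fourier transform is the everywhere positive Gaussian $\xi\mapsto(\pi/t)^{d/2}e^{-\|\xi\|_2^2/(4t)}$, so by Bochner's theorem the inner sum is a positive multiple of $\int_{\mathbb{R}^d} e^{-\|\xi\|_2^2/(4t)} \bigl| \sum_j c_j e^{i x_j \cdot \xi}\bigr|^2 d\xi$, which is strictly positive since the exponential sum $\xi\mapsto\sum_j c_j e^{i x_j\cdot\xi}$ is real-analytic, not identically zero (the frequencies $x_j$ are distinct and the $c_j$ not all zero), hence nonzero off a set of measure zero. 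Thus the integrand above is $\ge 0$ for all $t\ge 0$ and $>0$ for every $t>0$; as the integrand is continuous in $t$ and $\nu$ charges some compact $[a,b]\subset(0,\infty)$ (because $\nu((0,\infty))>0$), the integral is strictly positive. Since $d$ is arbitrary, $\Phi$ is strictly positive definite on every $\mathbb{R}^d$, which is $(1)$.

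Finally, for $(1)\Rightarrow(2)$ I would note that strict positive definiteness on every $\mathbb{R}^d$ forces $\Phi=g(\|\cdot\|_2^2)$ to be positive definite, so Schoenberg's theorem applied to $g$ yields that $g=\varphi(\sqrt{\cdot})$ is completely monotone. It only remains to see $g$ is non constant: if $\varphi\equiv c$ then $\sum_{j,k} c_j\bar{c_k}\,\varphi(\|x_j-x_k\|_2)=c\,\bigl|\sum_j c_j\bigr|^2$, which vanishes for any nonzero choice with $\sum_j c_j=0$ (e.g.\ $N=2$, $c_1=1$, $c_2=-1$), contradicting strictness. This closes the cycle. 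The main obstacle is the strictness in $(3)\Rightarrow(1)$: one must both establish the pointwise strict positivity of the Gaussian quadratic form for every $t>0$ and ensure, via continuity of the integrand together with $\nu((0,\infty))>0$, that this strictly positive contribution is not annihilated by integration against $\nu$.
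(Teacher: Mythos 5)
Your proof is correct, but note that the paper itself offers no proof of this statement: it is quoted verbatim as Wendland's theorem from the cited monograph \cite{We} and used as a black box in Sections 5 and 6. So the relevant comparison is with the standard proof in that reference, which your argument essentially reconstructs: Bernstein's theorem gives $(2)\Leftrightarrow(3)$, with uniqueness of the representing measure correctly matching ``non constant'' to ``$\nu$ not concentrated at zero''; the Gaussian Fourier-inversion computation gives strict positive definiteness of $x\mapsto e^{-t\|x\|_2^2}$ for each fixed $t>0$, and hence $(3)\Rightarrow(1)$ once you observe that $\nu((0,\infty))>0$ forces $\nu$ to charge some compact $[a,b]\subset(0,\infty)$, on which the continuous positive integrand has a positive minimum; and Schoenberg's theorem (stated immediately before this theorem in the paper) plus the elementary non-constancy argument gives $(1)\Rightarrow(2)$. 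What your write-up buys is self-containedness relative to the paper: every ingredient you invoke (Bochner, Bernstein, Schoenberg) is already stated in Sections 1 and 5, so the paper could absorb your argument without importing anything new. Two small points you should make explicit rather than implicit: strict positive definiteness must be read with \emph{distinct} nodes $x_1,\dots,x_N$ (otherwise the quadratic form vanishes trivially for repeated points), a convention you rely on both in $(3)\Rightarrow(1)$, where linear independence of the characters $\xi\mapsto e^{ix_j\cdot\xi}$ needs distinct frequencies, and in the non-constancy step of $(1)\Rightarrow(2)$; and continuity of $\varphi$, needed to apply Schoenberg's and Bochner's theorems, is hidden in the paper's definition of (strictly) positive definite functions, while in the direction $(3)\Rightarrow(1)$ it follows from the integral representation by dominated convergence.
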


\begin{propo} For $\alpha>0$, the even function
\begin{equation}\label{cmk}
\displaystyle x^{\frac{\alpha}{2}} K_\alpha (\sqrt{x})\in CM(]0,\infty[),
\end{equation}
 where $K_\alpha$ is the modified Bessel function of second kind.
 \end{propo}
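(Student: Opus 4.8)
The plan is to exhibit $g(x):=x^{\alpha/2}K_\alpha(\sqrt{x})$ as the Laplace transform of a nonnegative finite Borel measure on $[0,\infty[$ and then invoke Bernstein's theorem (quoted in the Introduction), which characterises $CM(]0,\infty[)$ precisely as the Laplace transforms of such measures. Everything therefore reduces to producing an explicit representation $g(x)=\int_0^\infty e^{-xs}\,d\mu(s)$ with $d\mu\ge 0$.

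The concrete mechanism I would use starts from the standard integral representation of $K_\alpha$, read off at index $-\alpha$ (legitimate because $K_{-\alpha}=K_\alpha$):
\[ K_\alpha(z)=\frac{1}{2}\Big(\frac{z}{2}\Big)^{-\alpha}\int_0^\infty \exp\!\Big(-t-\frac{z^2}{4t}\Big)\,t^{\alpha-1}\,dt . \]
Putting $z=\sqrt{x}$ and multiplying by $x^{\alpha/2}=z^{\alpha}$, the factor $z^{\alpha}(z/2)^{-\alpha}=2^{\alpha}$ becomes a constant, leaving
\[ x^{\alpha/2}K_\alpha(\sqrt{x})=2^{\alpha-1}\int_0^\infty \exp\!\Big(-t-\frac{x}{4t}\Big)\,t^{\alpha-1}\,dt . \]
The substitution $s=1/(4t)$ then carries $x$ into the exponent as $e^{-xs}$ and yields
\[ x^{\alpha/2}K_\alpha(\sqrt{x})=2^{-\alpha-1}\int_0^\infty e^{-xs}\,s^{-\alpha-1}e^{-1/(4s)}\,ds . \]

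It then remains to verify that $d\mu(s)=2^{-\alpha-1}s^{-\alpha-1}e^{-1/(4s)}\,ds$ is a nonnegative finite measure: the density is manifestly positive, it is integrable at $s=0^+$ since $e^{-1/(4s)}$ decays faster than any power, and it is integrable at $s=+\infty$ exactly because $\alpha>0$ makes $s^{-\alpha-1}$ summable there (its total mass works out to $2^{\alpha-1}\Gamma(\alpha)=g(0^+)$). Bernstein's theorem then gives $g\in CM(]0,\infty[)$. I expect the genuine obstacle to be precisely the choice of index: reading the representation at the \emph{natural} index $+\alpha$ produces a spurious factor $x^{\alpha}$ standing in front of a Laplace transform, and $x^{\alpha}$ is not completely monotonic for $\alpha>0$, so the argument collapses; the whole point is to use $K_{-\alpha}=K_\alpha$ so that the $x$-power cancels against the prefactor. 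An alternative route would combine Schoenberg's theorem with the positive definiteness on every $\mathbb{R}^d$ of the Mat\'ern kernel $r\mapsto r^{\alpha}K_\alpha(r)$, but that requires multidimensional Fourier analysis not developed in this paper, so I would prefer the direct Laplace computation above.
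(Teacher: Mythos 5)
Your proof is correct, but it follows a genuinely different route from the paper's. The paper deduces the proposition from its Theorem \ref{K}: Basset's integral representation plus Bochner's theorem show that $x^{\alpha}K_{\alpha}(x)$ is strictly positive definite on $\mathbb{R}$, and the proposition is then read off from the Schoenberg--Wendland correspondence between positive definite radial functions and completely monotone functions of $\sqrt{\cdot}$. You instead produce an explicit Laplace representation
\[
x^{\alpha/2}K_{\alpha}(\sqrt{x})=2^{-\alpha-1}\int_0^{\infty}e^{-xs}\,s^{-\alpha-1}e^{-1/(4s)}\,ds ,
\]
obtained from the standard integral for $K_{-\alpha}=K_{\alpha}$ (your index trick is exactly the right move, and the change of variables, the integrability at both endpoints, and the total mass $2^{\alpha-1}\Gamma(\alpha)$ all check out), and then invoke the easy direction of Bernstein's theorem. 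What your route buys: it is elementary and self-contained, it gives the representing measure explicitly, and it sidesteps a real weakness in the paper's two-line argument --- Wendland's Theorem \ref{W} requires strict positive definiteness on \emph{every} $\mathbb{R}^d$, whereas Theorem \ref{K} only establishes it on $\mathbb{R}=\mathbb{R}^1$, and positive definiteness on the line alone does not imply complete monotonicity of $\varphi(\sqrt{\cdot})$ (the cosine is a counterexample); so the paper's proof contains a jump that yours avoids entirely. What the paper's route buys: it keeps the proposition inside the positive-definiteness framework that is the theme of the paper and needs no integral identity beyond the one already used for Theorem \ref{K}.
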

\begin{proof}
By Theorem \ref {K}, if $\alpha>0$, then
$$\displaystyle x^\alpha K_\alpha (x)\in \mathcal{P}^s .$$
Since  this  function is $C^\infty (\mathbb{R})$ and positive on $]0,\infty[$, we conclude.
\end{proof}

\noindent The next result exist in \cite{Sr}. We give an elementary proof.
 \begin{theorem} Suppose that 
 $$\displaystyle f\in C(I), \quad f>0 \quad {\rm and} \quad f'\in {\rm CM(I)}.$$
 Then 
 $$\displaystyle \frac{1}{f}\in {\rm LCM(I)}.$$
 
 \end{theorem}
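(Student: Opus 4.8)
The plan is to reduce the logarithmic complete monotonicity of $1/f$ to the ordinary complete monotonicity of $f'/f$, and then to factor the latter. First I would note that $1/f>0$ and $1/f\in C(I)$ are immediate from $f>0$ and $f\in C(I)$, and that $f'\in{\rm CM}(I)$ forces $f\in C^\infty(I^0)$, so that $g:=1/f$ is smooth wherever $f>0$. Writing $\ln(1/f)=-\ln f$ and differentiating, I obtain for every $n\geq 1$
$$\ds \left[\ln\frac{1}{f}\right]^{(n)}=-\left(\frac{f'}{f}\right)^{(n-1)},$$
whence $(-1)^n\left[\ln(1/f)\right]^{(n)}=(-1)^{n-1}(f'/f)^{(n-1)}$, and requiring this to be nonnegative for all $n\geq 1$ is, after setting $m=n-1$, exactly the statement that $f'/f\in{\rm CM}(I)$. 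So the whole theorem comes down to proving $f'/f\in{\rm CM}(I)$, and for that I would write $f'/f=f'\cdot(1/f)$ and invoke the Lemma that a product of completely monotonic functions is completely monotonic; since $f'\in{\rm CM}(I)$ by hypothesis, it suffices to establish $1/f\in{\rm CM}(I)$.

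The core of the argument, and the step I expect to be the main obstacle, is proving $1/f\in{\rm CM}(I)$ directly. From $f'\in{\rm CM}(I)$ one has $(-1)^m(f')^{(m)}\geq 0$ for all $m\geq 0$, i.e. $(-1)^{k-1}f^{(k)}\geq 0$ for every $k\geq 1$, so $f^{(k)}$ carries the sign $(-1)^{k-1}$, while $f>0$. I would then differentiate the identity $fg=1$ with the Leibniz rule, getting for each $n\geq 1$
$$\ds f\,g^{(n)}=-\sum_{k=1}^{n}\binom{n}{k}f^{(k)}g^{(n-k)},$$
and proceed by induction on $n$ to show $(-1)^ng^{(n)}\geq 0$, the base case $g=1/f>0$ being clear. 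Assuming $(-1)^{j}g^{(j)}\geq 0$ for all $0\leq j<n$, each summand $f^{(k)}g^{(n-k)}$ with $1\leq k\leq n$ carries the sign $(-1)^{k-1}(-1)^{n-k}=(-1)^{n-1}$; since the binomial coefficients are positive and $f>0$, the displayed identity forces $g^{(n)}$ to carry the sign $-(-1)^{n-1}=(-1)^n$, closing the induction.

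Finally, combining the pieces: $1/f\in{\rm CM}(I)$ together with $f'\in{\rm CM}(I)$ gives $f'/f=f'\cdot(1/f)\in{\rm CM}(I)$ by the product Lemma, and this is equivalent to $1/f\in{\rm LCM}(I)$ by the logarithmic-derivative computation of the first paragraph. The only delicate point is the uniformity of the sign bookkeeping over $x\in I^0$ and the existence of all the derivatives involved; both are handled at the outset by observing that $f'\in{\rm CM}(I)$ makes $f$, and hence $g=1/f$, infinitely differentiable throughout $I^0$, so that every differentiation and every pointwise inequality above is legitimate on $I^0$.
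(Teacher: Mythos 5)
Your proof is correct, but it follows a genuinely different route from the paper's. The paper runs a single induction directly on the quantities $(-1)^n\bigl[\ln(1/f)\bigr]^{(n)}$: it writes $f'=-f\,\bigl[\ln(1/f)\bigr]'$, applies the Leibniz rule to get $(f')^{(n)}=-\sum_{k=0}^{n}C_n^k\,f^{(k)}\bigl[\ln(1/f)\bigr]^{(n-k+1)}$, isolates the $k=0$ term, and closes the induction using the sign of $(f')^{(n)}$ and of $f^{(k)}=(f')^{(k-1)}$ (both from $f'\in{\rm CM}(I)$) together with the inductive hypothesis on the lower-order logarithmic derivatives. You instead modularize: first the exact reduction ${1}/{f}\in{\rm LCM}(I)\iff f'/f\in{\rm CM}(I)$ via $\bigl[\ln(1/f)\bigr]^{(n)}=-\bigl(f'/f\bigr)^{(n-1)}$, then the intermediate result $1/f\in{\rm CM}(I)$ proved by Leibniz-plus-induction on $f\cdot(1/f)=1$, and finally the factorization $f'/f=f'\cdot(1/f)$ combined with the paper's Lemma 1 (products of CM functions are CM). Both arguments are elementary, inductive, and Leibniz-based, and your sign bookkeeping is accurate throughout. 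What your route buys: it isolates as a stepping stone the standard and independently useful fact that the reciprocal of a positive function with completely monotonic derivative (a Bernstein-type function) is completely monotonic, and it makes the equivalence between ${\rm LCM}$ of $1/f$ and ${\rm CM}$ of the logarithmic derivative explicit. What the paper's route buys: it is self-contained in one induction and never needs the product lemma or the separate reciprocal statement, at the cost of a somewhat more intricate simultaneous handling of $f^{(k)}$ and the log-derivatives.
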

 \begin{proof} Proving by induction that:
 $$\displaystyle (-1)^n \left({\rm ln}(\frac{1}{f})\right)^{(n)}\geq 0,\quad \forall n\geq 1.$$
 For $n=1$, we have 
 $$\displaystyle (-1)\left({\rm ln}(\frac{1}{f})\right)^{(1)}=\frac{f'}{f}\geq 0.$$
 Suppose that 
 $$\displaystyle (-1)^k \left({\rm ln}(\frac{1}{f})\right)^{(k)}\geq 0,\quad \forall 1\leq k \leq n.$$ 
 Put $\displaystyle f'=f \frac{f'}{f}$. By the Leibnitz formula, we get 
 
 $$\displaystyle \left(f'\right)^{(n)} =-\left[f\left({\rm ln}(\frac{1}{f})\right)^{(1)}\right]^{(n)} 
  =\sum_{k=0}^n C_n^k f^{(k)} \left({\rm ln}(\frac{1}{f})\right)^{(n-k+1)}.$$
 Then 
 $$\displaystyle (-1)f \left({\rm ln}(\frac{1}{f})\right)^{(n+1)}=\left(f'\right)^{(n)} +
 \sum_{k=1}^n C_n^k f^{(k)} \left({\rm ln}(\frac{1}{f})\right)^{(n-k+1)},$$
which readily yields 
  $$\displaystyle (-1)^{n+1}f \left({\rm ln}(\frac{1}{f})\right)^{(n+1)}=(-1)^{n}\left(f'\right)^{(n)} +\qquad \qquad\qquad\qquad\qquad\qquad\qquad\qquad\qquad\qquad\qquad$$
 $$\qquad\qquad \sum_{k=1}^n C_n^k \left[(-1)^{k-1} (f')^{(k-1)}\right]
 \left[(-1)^{n-k+1} \left({\rm ln}(\frac{1}{f})\right)^{(n-k+1)}\right].$$
 Since $f>0$, $f'\in {\rm CM(I)}$ and $\displaystyle (-1)^k \left({\rm ln}(\frac{1}{f})\right)^{(k)}\geq 0,\quad \forall 1\leq k \leq n,$ we obtain 
 
 $$\displaystyle (-1)^{n+1}f \left({\rm ln}(\frac{1}{f})\right)^{(n+1)}\geq 0,$$
 which complete the proof.
\end{proof}
\begin{theorem}
Let $\varphi : [0,\infty[\rightarrow \mathbb{R}$  \quad  and  \quad $\varphi'$ non vanishing .\\
Suppose that 
 $$\displaystyle \varphi=\hat{\mu}\in \mathcal{P}^s\cap C^\infty.\quad $$
 Then
 $$\displaystyle -\frac{1}{\varphi'(\sqrt{.})}\in {\rm LCM}(0,\infty).$$

\end{theorem}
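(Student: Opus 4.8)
The plan is to deduce the statement from the preceding theorem, which manufactures logarithmically completely monotonic functions as reciprocals of positive functions whose derivative is completely monotone. The first move is to convert the hypothesis $\varphi=\hat{\mu}\in\mathcal{P}^s$ into usable analytic data via Wendland's theorem (Theorem \ref{W}), applied to the radial function $\varphi$: the equivalence of its three conditions supplies both that $x\mapsto\varphi(\sqrt{x})$ is completely monotone and non-constant on $[0,\infty)$ and the integral representation
$$\varphi(r)=\int_0^\infty e^{-r^2 t}\,d\nu(t),$$
for a finite nonnegative Borel measure $\nu$ not concentrated at $0$. Setting $F(x):=\varphi(\sqrt{x})=\int_0^\infty e^{-xt}\,d\nu(t)$ isolates the central object: $F\in CM(0,\infty)$, and all of its derivatives are obtained by differentiating under the integral sign.

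Next I would put $f(x):=-\varphi'(\sqrt{x})$, so that $-1/\varphi'(\sqrt{\cdot})=1/f$, and check the structural hypotheses of the preceding theorem on $I=(0,\infty)$. Positivity $f>0$ follows because $F$ is completely monotone, hence nonincreasing, so $F'(x)=\varphi'(\sqrt{x})/(2\sqrt{x})\le 0$; since $\varphi'$ is assumed non-vanishing on $(0,\infty)$, this forces $\varphi'(\sqrt{x})<0$ and thus $f(x)>0$. Smoothness $f\in C^\infty(0,\infty)$ is immediate from $\varphi\in C^\infty$. Granting the completely monotone character of $f'$, the preceding theorem applies verbatim and yields $1/f=-1/\varphi'(\sqrt{\cdot})\in LCM(0,\infty)$, which is the assertion.

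The decisive step is therefore to show $f'\in CM(0,\infty)$, and this is where I expect the real work to lie. Using $\varphi'(\sqrt{x})=2\sqrt{x}\,F'(x)$ one gets $f(x)=-2\sqrt{x}\,F'(x)=2\sqrt{x}\int_0^\infty t\,e^{-xt}\,d\nu(t)$, and differentiating,
$$f'(x)=\int_0^\infty \frac{t}{\sqrt{x}}\,(1-2xt)\,e^{-xt}\,d\nu(t),$$
so the goal is to read off $(-1)^n f^{(n+1)}(x)\ge 0$ for all $n\ge 0$ directly from this representation. The obstruction is precisely the factor $\sqrt{x}$ produced by the chain rule: it prevents $f$ from being a bare Laplace transform, the integrand changes sign at $t=1/(2x)$, and hence complete monotonicity of $f'$ does not follow formally from $F\in CM$ but must be extracted carefully by combining the monotonicity of $F,F'$ with the fact that the multiplier $x^{-1/2}$ is itself completely monotone.

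An equivalent reformulation, which may be easier to control, is to argue directly through the logarithmic-derivative characterization of $LCM$. Writing $h:=-F'\in CM(0,\infty)$ with $h>0$, the claim $1/(2\sqrt{x}\,h)\in LCM$ is equivalent to
$$\tfrac{1}{2x}+\frac{h'(x)}{h(x)}\in CM(0,\infty),$$
i.e. to showing that the completely monotone term $1/(2x)$ dominates the logarithmic derivative $(\ln h)'$ in the sense of complete monotonicity. I would attack this by expressing $h$ as the Laplace transform of $t\,d\nu(t)$ and estimating its successive derivatives. Either route reduces the theorem to this single positivity-of-all-derivatives statement; the remaining ingredients—Wendland's theorem and the preceding reciprocal theorem—are then routine.
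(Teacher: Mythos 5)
Your skeleton coincides with the paper's own proof: set $f=-\varphi'(\sqrt{\cdot})$, check $f>0$, and feed $f$ into the preceding theorem once $f'\in{\rm CM}(0,\infty)$ is established (your positivity argument is in fact more detailed than the paper's, which simply asserts $f>0$). But you stop exactly at the decisive point: you reduce everything to the claim $f'\in{\rm CM}(0,\infty)$, write down the representation $f'(x)=\int_0^\infty \frac{t}{\sqrt{x}}(1-2xt)e^{-xt}\,d\nu(t)$, observe that the integrand changes sign, and defer the proof. That unproven claim is the entire content of the statement, so your proposal has a genuine gap. For comparison, the paper closes it as follows: from $\varphi=\hat{\mu}\in\mathcal{P}^s\cap C^\infty$ it cites Derrien (Corollary 6.3) to get $-\varphi''\in\mathcal{P}^s$, then asserts that $-\varphi''\in\mathcal{P}^s$ implies $-\varphi''(\sqrt{\cdot})\in{\rm CM}(0,\infty)$, and finally factors $f'(x)=\frac{1}{2\sqrt{x}}\left(-\varphi''(\sqrt{x})\right)$ as a product of two completely monotonic functions.

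You should know, however, that the obstruction you flagged is fatal rather than technical: the gap cannot be filled, because the statement as given is false, and the step by which the paper fills it is precisely where its proof breaks. The implication ``$-\varphi''\in\mathcal{P}^s \Rightarrow -\varphi''(\sqrt{\cdot})\in{\rm CM}$'' rests on Wendland's theorem (Theorem \ref{W}), which converts complete monotonicity into strict positive definiteness on \emph{every} $\mathbb{R}^d$, whereas $\mathcal{P}^s$ records strict positive definiteness on $\mathbb{R}$ alone; these classes are very different (a member of $\mathcal{P}^s$ may take negative values, while a function with completely monotone profile cannot). Concretely, take $\varphi(x)=e^{-x^2}=\hat{\mu}$ with $\mu$ Gaussian: every hypothesis holds ($\varphi\in\mathcal{P}^s\cap C^\infty$, indeed strictly positive definite on every $\mathbb{R}^d$, with $\varphi'(x)=-2xe^{-x^2}$ non-vanishing on $(0,\infty)$). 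Here $-\varphi''(x)=(2-4x^2)e^{-x^2}$ does lie in $\mathcal{P}^s$ yet takes negative values, so $-\varphi''(\sqrt{x})=(2-4x)e^{-x}$ is not completely monotonic; your integrand with $\nu=\delta_1$ is $\frac{1}{\sqrt{x}}(1-2x)e^{-x}$, which indeed changes sign at $x=\frac{1}{2}$; and the conclusion fails outright, since $-1/\varphi'(\sqrt{x})=e^{x}/(2\sqrt{x})$ tends to $+\infty$ and hence is not even completely monotonic, much less in ${\rm LCM}(0,\infty)$. So the correct outcome of your analysis is not a sharper estimate but a counterexample; the theorem needs a genuinely stronger hypothesis, one that rules out the Gaussian. (The same defect propagates to the paper's application: $1/\left(x^{\frac{\alpha+1}{2}}K_\alpha(\sqrt{x})\right)\to\infty$ as $x\to\infty$, so it too cannot be completely monotonic.)
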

\begin{proof}
Since $\varphi\in \mathcal{P}$, by Bochner's theorem  there exist $\mu\in M^+$ such that $\varphi =\hat{\mu}$.\\

\noindent Since $\varphi \in C^\infty$, By corollary 6.3 in \cite{D}, we have 
 $$\displaystyle x^2 \mu \in M^+ {\rm and } -\varphi'' \in \mathcal{P}^s$$
 Let 
 $$\displaystyle f=-\varphi'(\sqrt{.}),$$
then 
$$\displaystyle f'(x)=-\frac{1}{2\sqrt{x}} \varphi''(\sqrt{x}).$$
Since $-\varphi'' \in \mathcal{P}^s$, then $-\varphi''(\sqrt{.})\in {\rm CM}(0,\infty)$.\\
 Moreover $\frac{1}{\sqrt{.}} \in {\rm CM}(0,\infty)$ , hence $f'\in {\rm CM}(0,\infty)$.
Since $f>0$, we conclude by the last theorem.
 
 \end{proof}
 \begin{coro}
 Let $\varphi : [0,\infty[\rightarrow \mathbb{R}$  \quad  and  \quad $\varphi^{(n)}$ non vanishing for all 
 $n\geq 1$.\\
Suppose that 
 $$\displaystyle \varphi=\hat{\mu}\in \mathcal{P}^s\cap C^\infty.\quad $$
 Then
 $$\displaystyle -\frac{1}{\varphi^{(n)}}\in \mathcal{P}^s\quad \forall n\geq 1.$$
\end{coro}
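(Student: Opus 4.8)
The plan is to combine the preceding Theorem, which manufactures logarithmically completely monotonic functions out of the derivatives of $\varphi$, with the inclusion $LCM(0,\infty)\subset CM(0,\infty)$ and Wendland's Theorem \ref{W}, which converts complete monotonicity of $g(\sqrt{\cdot})$ into strict positive definiteness of $g$ on every $\mathbb{R}^d$. The whole argument is an induction on $n$ in which Corollary 6.3 of \cite{D} supplies the inductive bridge.

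For the base case $n=1$ I would invoke the preceding Theorem directly: since $\varphi=\hat\mu\in\mathcal{P}^s\cap C^\infty$ and $\varphi'$ is non-vanishing, it yields $-\frac{1}{\varphi'(\sqrt{\cdot})}\in LCM(0,\infty)$. Because $LCM(0,\infty)\subset CM(0,\infty)$ this function is completely monotone, and it is non-constant (otherwise $\varphi'$ would be, forcing $\varphi''$ to vanish). Writing $-\frac{1}{\varphi'(\sqrt{\cdot})}=g(\sqrt{\cdot})$ with $g=-\frac{1}{\varphi'}$, Wendland's Theorem \ref{W} (implication $(2)\Rightarrow(1)$) then gives $g=-\frac{1}{\varphi'}\in\mathcal{P}^s$.

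For the inductive step I would use Corollary 6.3 of \cite{D}, already exploited in the Theorem: from $\varphi=\hat\mu$ one obtains $t^2\mu\in M^+$ and $-\varphi''=\widehat{t^2\mu}\in\mathcal{P}^s$, with $-\varphi''$ again smooth. Thus $\psi:=-\varphi''$ satisfies exactly the hypotheses of the Theorem ($\psi\in\mathcal{P}^s\cap C^\infty$, $\psi'=-\varphi'''$ non-vanishing), so the Theorem applies to $\psi$ and produces a completely monotone reciprocal of the next derivative. Iterating this, I would prove by induction that $(-1)^k\varphi^{(2k)}=\widehat{t^{2k}\mu}\in\mathcal{P}^s\cap C^\infty$ for every $k$, and applying the Theorem to each of these even-order functions controls the reciprocal of the following odd-order derivative. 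Each resulting $LCM$ function is then pushed through $LCM\subset CM$ and Wendland's Theorem \ref{W} to land in $\mathcal{P}^s$, yielding the claim for every $n\ge1$.

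The delicate point, and the step I expect to be the real obstacle, is the sign and parity bookkeeping. Applying the Theorem to $(-1)^k\varphi^{(2k)}$ outputs $-\frac{1}{[(-1)^k\varphi^{(2k)}]'(\sqrt{\cdot})}=\frac{(-1)^{k+1}}{\varphi^{(2k+1)}(\sqrt{\cdot})}$, so the sign in front of $\frac{1}{\varphi^{(n)}}$ alternates with $n$; what the hypothesis $\varphi^{(n)}$ non-vanishing really buys is that each $\varphi^{(n)}$ keeps a constant sign on $(0,\infty)$, so that exactly one choice of sign makes the reciprocal positive and hence eligible to be logarithmically completely monotone. At every stage one must verify both this positivity and non-constancy, since Wendland's Theorem \ref{W} delivers strict positive definiteness only for non-constant completely monotone profiles. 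Once this bookkeeping is arranged, the induction closes and the corollary follows.
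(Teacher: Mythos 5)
Your base case coincides with the paper's (apply the preceding theorem to $\varphi$, then $LCM\subset CM$ and Wendland's Theorem \ref{W}), but your inductive step takes a different route and it contains a genuine gap: it never reaches the even-order derivatives. Iterating Corollary 6.3 of \cite{D} gives $(-1)^k\varphi^{(2k)}=\widehat{t^{2k}\mu}\in\mathcal{P}^s\cap C^\infty$, and feeding these into the preceding theorem controls only the reciprocals of the \emph{odd}-order derivatives $\varphi^{(2k+1)}$. Knowing $(-1)^k\varphi^{(2k)}\in\mathcal{P}^s$ says nothing about $-1/\varphi^{(2k)}$ being in $\mathcal{P}^s$ --- passing from a strictly positive definite function to its reciprocal is exactly the hard step, and in your scheme the even-order derivative only ever appears as \emph{input}, producing the next odd-order reciprocal as output. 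So for $n=2,4,6,\dots$ your argument proves nothing. Moreover, the sign issue you flag is not removable bookkeeping: your route outputs $\frac{(-1)^{k+1}}{\varphi^{(2k+1)}}$, e.g.\ $+1/\varphi'''$ for $n=3$, whereas the statement asserts $-1/\varphi^{(n)}$ uniformly; since a function and its negative cannot both be strictly positive definite (one of them is negative at the origin), you cannot ``choose the sign'' afterwards to land on the claimed conclusion.

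The idea you are missing is the paper's bootstrapping induction, which differentiates $\mu$ not at all beyond the single application hidden inside the preceding theorem. Assume $\phi:=-1/\varphi^{(n)}\in\mathcal{P}^s$. Then $\phi$ is again $C^\infty$ (since $\varphi^{(n)}$ is smooth and non-vanishing), is by Bochner's theorem the transform of a measure in $M^+$, and $\phi'=\varphi^{(n+1)}/(\varphi^{(n)})^2$ is non-vanishing; so the preceding theorem and Wendland's Theorem \ref{W} apply to $\phi$ in place of $\varphi$, giving $-1/\phi'\in\mathcal{P}^s$. The chain-rule identity
$$-\frac{1}{\varphi^{(n+1)}}=\left(-\frac{1}{\phi'}\right)\left[-\frac{1}{\varphi^{(n)}}\right]^2=\left(-\frac{1}{\phi'}\right)\phi^2$$
then exhibits the next reciprocal as a product of functions in $\mathcal{P}^s$, hence in $\mathcal{P}^s$ by closure of strict positive definiteness under products (Schur's product theorem). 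This advances one derivative at a time, covers even and odd $n$ alike, and keeps the minus sign uniform --- precisely what your parity-split argument cannot do. (I will add that your observation about alternating signs is a legitimate worry about the corollary itself: under the stated hypotheses $\varphi'''>0$ near $0$, so the uniform sign in the statement, and the positivity the paper's own induction silently needs at each stage, are questionable; but as a reconstruction of the paper's proof, the product identity above is the missing ingredient.)
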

 \begin{proof} By induction :\\
 Since $\varphi=\hat{\mu}\in \mathcal{P}^s\cap C^\infty$ and $\varphi'$ non vanishing, 
 then $$-\frac{1}{\varphi'(\sqrt{.})}\in {\rm CM}.$$
  Thus by Wendland theorem
  $$-\frac{1}{\varphi'}\in \mathcal{P}^s.$$
  Suppose that 
 $$\displaystyle \phi=-\frac{1}{\varphi^{(n)}}\in \mathcal{P}^s,\quad \forall n\geq 1.$$ 
 Then 
$$\displaystyle -\frac{1}{\phi'}\in \mathcal{P}^s.$$ 
 Hence
$$\displaystyle  -\frac{1}{\varphi^{(n+1)}}= -\frac{1}{\phi'} \left[-\frac{1}{\varphi^{(n)}}\right]^2 \in \mathcal{P}^s,$$
 witch complete the proof.
 \end{proof}
 \section{Applications}
 \subsection{A class of logarithmically completely monotonic functions related to the modified Bessel 
 functions of second kind}
 \begin{theorem}
 For $\alpha >0$, the function
 \begin{equation}
 \displaystyle \frac{1}{x^{\frac{\alpha+1}{2}}K_\alpha (\sqrt{x})} \in LCM(]0,\infty[).
 \end{equation}
 
 \end{theorem}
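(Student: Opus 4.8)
The plan is to deduce the statement from the theorem of the previous section asserting that if $\varphi=\hat\mu\in\mathcal{P}^s\cap C^\infty$ has non-vanishing derivative then $-\frac{1}{\varphi'(\sqrt{\cdot})}\in\mathrm{LCM}(0,\infty)$, applied to the single well-chosen function $\varphi(x)=x^{\alpha+1}K_{\alpha+1}(x)$. First I would record the classical differentiation formula for the Macdonald function,
$$\frac{d}{dx}\left[x^{\nu}K_{\nu}(x)\right]=-x^{\nu}K_{\nu-1}(x),$$
which follows from the recurrences $K_\nu'(x)=-K_{\nu-1}(x)-\frac{\nu}{x}K_\nu(x)$ and $K_{\nu+1}(x)=K_{\nu-1}(x)+\frac{2\nu}{x}K_\nu(x)$. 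Taking $\nu=\alpha+1$ gives $\varphi'(x)=-x^{\alpha+1}K_\alpha(x)$, hence
$$-\frac{1}{\varphi'(\sqrt{x})}=\frac{1}{x^{\frac{\alpha+1}{2}}K_\alpha(\sqrt{x})},$$
which is exactly the function we must show is logarithmically completely monotonic. So the whole statement is an index shift of the cited theorem.

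Next I would verify the hypotheses of that theorem for this $\varphi$. Since $\alpha+1>0$, Theorem \ref{K} gives $x^{\alpha+1}K_{\alpha+1}(x)\in\mathcal{P}^s$, so $\varphi=\hat\mu$ for a nonnegative finite measure $\mu$ by Bochner's theorem, and $\varphi$ is analytic (hence $C^\infty$) on $]0,\infty[$ where $K_{\alpha+1}$ is analytic. Because $K_\alpha(x)>0$ for $x>0$, the derivative $\varphi'(x)=-x^{\alpha+1}K_\alpha(x)$ is strictly negative, in particular non-vanishing, as required. Once these facts are in place, the cited theorem yields directly $-\frac{1}{\varphi'(\sqrt{\cdot})}\in\mathrm{LCM}(]0,\infty[)$, which is the desired conclusion.

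The point that requires the most care is the positive-definiteness input feeding the cited theorem: its proof needs $-\varphi''\in\mathcal{P}^s$, equivalently (via the corollary of \cite{D} used there) that the representing measure $\mu$ has a finite second moment, $x^2\mu\in M^+$. Using the Basset representation $x^{\alpha+1}K_{\alpha+1}(x)=c_{\alpha}\int_0^\infty\cos(xt)\,(1+t^2)^{-(\alpha+\frac{3}{2})}dt$, the measure is $d\mu(t)\propto(1+t^2)^{-(\alpha+\frac{3}{2})}dt$, and the integral $\int_0^\infty t^2(1+t^2)^{-(\alpha+\frac{3}{2})}dt$ converges precisely because $\alpha>0$; this is exactly where the hypothesis $\alpha>0$, rather than merely $\alpha>-1$, is genuinely used. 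With this verified, the differentiation formula and the sign bookkeeping above complete the argument, and no further estimate on $K_\alpha$ is needed.
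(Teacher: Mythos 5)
Your proposal is correct and takes essentially the same route as the paper: the differentiation formula $\frac{d}{dx}\left[x^{\alpha+1}K_{\alpha+1}(x)\right]=-x^{\alpha+1}K_{\alpha}(x)$, strict positive definiteness of $x^{\alpha+1}K_{\alpha+1}(x)$ via Theorem \ref{K}, and then the theorem asserting $-\frac{1}{\varphi'(\sqrt{\cdot})}\in {\rm LCM}(]0,\infty[)$. Your extra check that the Basset measure $(1+t^2)^{-(\alpha+\frac{3}{2})}dt$ has a finite second moment exactly when $\alpha>0$ is a careful refinement of the paper's bare assertion that $\varphi\in\mathcal{P}^s\cap C^\infty$, but it does not alter the argument.
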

 \begin{proof} 
 By theorem 6.4 in \cite{Be}, we have 
  \begin{equation}
  \displaystyle  \frac{d}{dx} \left[x^{\alpha+1} K_{\alpha+1} (x)\right]=-x^{\alpha+1} K_\alpha (x).
 \end{equation}
 By proposition \ref{K}, we have  
  $$\displaystyle x^{\alpha+1} K_{\alpha+1} (x)\in \mathcal{P}^s \cap C^\infty$$
  and is non vanishing. Theorem 10 complete the proof.
   \end{proof}
   
 \begin{propo}
 For $\alpha >0$ and $\beta >0$,
 \begin{equation}
 \displaystyle g_{\alpha,\beta}(x)=\frac{K_{\alpha+1}(\sqrt{x})}{x^\beta K_\alpha (\sqrt{x})}\in CM(]0,\infty[).
 \end{equation}
 \end{propo}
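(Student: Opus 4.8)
The plan is to realise $g_{\alpha,\beta}$ as a product of functions that are already known to be completely monotonic, and then to appeal to the fact that the product of completely monotonic functions is again completely monotonic (Lemma 1). The key device is to insert compensating powers of $x$ so that the two available ``building block'' results can be applied at the same time.

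First I would write
$$\displaystyle g_{\alpha,\beta}(x)=x^{-\beta}\Big[x^{\frac{\alpha+1}{2}}K_{\alpha+1}(\sqrt{x})\Big]\frac{1}{x^{\frac{\alpha+1}{2}}K_\alpha(\sqrt{x})},$$
in which the two factors $x^{\frac{\alpha+1}{2}}$ and $x^{-\frac{\alpha+1}{2}}$ cancel, so that the right-hand side reproduces exactly $x^{-\beta}K_{\alpha+1}(\sqrt{x})/K_\alpha(\sqrt{x})=g_{\alpha,\beta}(x)$. The point of this rewriting is that each of the three displayed factors is individually completely monotonic.

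Next I would verify that each factor lies in $CM(]0,\infty[)$. For $\beta>0$ the function $x^{-\beta}$ is completely monotonic on $]0,\infty[$, since $(-1)^n\,(x^{-\beta})^{(n)}=\beta(\beta+1)\cdots(\beta+n-1)\,x^{-\beta-n}\geq 0$. The middle factor $x^{\frac{\alpha+1}{2}}K_{\alpha+1}(\sqrt{x})$ is completely monotonic by the preceding proposition $(\ref{cmk})$, applied with the index $\alpha+1>0$ in place of $\alpha$. The last factor $\frac{1}{x^{\frac{\alpha+1}{2}}K_\alpha(\sqrt{x})}$ is logarithmically completely monotonic by the immediately preceding theorem, hence completely monotonic in view of the inclusion $LCM(I)\subset CM(I)$.

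Finally I would invoke Lemma 1: the product of these three completely monotonic functions is completely monotonic on $]0,\infty[$, which is precisely the assertion $g_{\alpha,\beta}\in CM(]0,\infty[)$. I do not anticipate a genuine obstacle here; the only point requiring care is the bookkeeping of the exponents, namely that the factor $x^{\frac{\alpha+1}{2}}$ introduced to make $K_{\alpha+1}(\sqrt{x})$ conform to $(\ref{cmk})$ is exactly the factor $x^{\frac{\alpha+1}{2}}$ needed to make $1/K_\alpha(\sqrt{x})$ conform to the preceding $LCM$ theorem, so that the surviving power of $x$ is precisely $x^{-\beta}$.
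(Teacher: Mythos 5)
Your proof is correct and follows essentially the same route as the paper: the same three-factor decomposition $x^{-\beta}\cdot\bigl[x^{\frac{\alpha+1}{2}}K_{\alpha+1}(\sqrt{x})\bigr]\cdot\bigl[x^{\frac{\alpha+1}{2}}K_\alpha(\sqrt{x})\bigr]^{-1}$, with the first factor elementary, the second handled by the proposition on $x^{\frac{\alpha}{2}}K_\alpha(\sqrt{x})$ applied at index $\alpha+1$, the third by the preceding $LCM$ theorem together with $LCM\subset CM$, and the conclusion by closure of $CM$ under products. Your write-up is in fact slightly more careful than the paper's, since you verify the complete monotonicity of $x^{-\beta}$ explicitly and track the cancellation of the powers of $x$.
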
 
 \begin{proof}
  For $\alpha >0$ and $\beta >0$, using proposition 4 and the last proposition we have 
   $$\displaystyle \frac{1}{x^\beta} \in CM(]0,\infty[),$$
   $$\displaystyle x^{\frac{\alpha+1}{2}}  K_{\alpha +1} (\sqrt{x})\in CM(]0,\infty[),$$
   and 
   $$\displaystyle \frac{1}{x^{\frac{\alpha+1}{2}}K_\alpha (\sqrt{x})} \in LCM(]0,\infty[)\subset CM(]0,\infty[).$$
   The product gave the result.
 \end{proof}
  \begin{propo}
    For $\alpha >0$,
  \begin{equation}
  \displaystyle K_\alpha (\sqrt{x}) \in LCM(]0,\infty[).
  \end{equation}
 \end{propo}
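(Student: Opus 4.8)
The plan is to reduce logarithmic complete monotonicity to a single complete-monotonicity statement. Write $f(x)=K_\alpha(\sqrt{x})$; it is smooth and strictly positive on $]0,\infty[$, so $f\in C(]0,\infty[)$ and $f>0$. Setting $g:=-(\ln f)'$, one has $g^{(n-1)}=-(\ln f)^{(n)}$ for every $n\ge 1$, whence $(-1)^{n-1}g^{(n-1)}=(-1)^{n}(\ln f)^{(n)}$. Thus the conditions $(-1)^{n}(\ln f)^{(n)}\ge 0$ $(n\ge 1)$ defining $\mathrm{LCM}$ hold if and only if $g=-\dfrac{d}{dx}\ln K_\alpha(\sqrt{x})\in\mathrm{CM}(]0,\infty[)$. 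The entire problem therefore reduces to showing that this logarithmic derivative is completely monotone.

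First I would compute $g$ explicitly. With $t=\sqrt{x}$ and the recurrence $2K_\alpha'(t)=-\bigl(K_{\alpha-1}(t)+K_{\alpha+1}(t)\bigr)$, the chain rule gives
\[
-\frac{d}{dx}\ln K_\alpha(\sqrt{x})=\frac{1}{4\sqrt{x}}\cdot\frac{K_{\alpha-1}(\sqrt{x})+K_{\alpha+1}(\sqrt{x})}{K_\alpha(\sqrt{x})}.
\]
Eliminating the lower index through $K_{\alpha-1}(t)=K_{\alpha+1}(t)-\tfrac{2\alpha}{t}K_\alpha(t)$ rewrites this as $\tfrac12\,g_{\alpha,1/2}(x)-\tfrac{\alpha}{2x}$, where $g_{\alpha,1/2}(x)=\dfrac{K_{\alpha+1}(\sqrt{x})}{\sqrt{x}\,K_\alpha(\sqrt{x})}$ is completely monotone by the preceding Proposition (case $\beta=\tfrac12$), and $\tfrac{\alpha}{2x}$ is completely monotone as well.

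The hard part is exactly here: the natural formula presents $g$ as a \emph{difference} $\tfrac12 g_{\alpha,1/2}-\tfrac{\alpha}{2x}$ of completely monotone functions, and such a difference need not be completely monotone; equivalently, the companion ratio $\dfrac{K_{\alpha-1}(\sqrt{x})}{\sqrt{x}\,K_\alpha(\sqrt{x})}$ is not of the form $g_{\nu,\beta}$ (its indices do not match, and for $0<\alpha\le 1$ the order $\alpha-1$ is nonpositive), so the preceding Proposition does not reach it, and taking reciprocals of $g_{\alpha-1,\beta}$ does not help, since the reciprocal of a completely monotone function is generally not completely monotone. To overcome this I would not combine the two pieces directly but reduce to the non-composed statement: prove that $t\mapsto K_\alpha(t)$ is logarithmically completely monotone on $]0,\infty[$, i.e. that $-\dfrac{K_\alpha'(t)}{K_\alpha(t)}=\dfrac{K_{\alpha+1}(t)}{K_\alpha(t)}-\dfrac{\alpha}{t}\in\mathrm{CM}$, using the classical Stieltjes representation of the ratio $K_{\alpha+1}/K_\alpha$ (Ismail), which exhibits $-K_\alpha'/K_\alpha$ as a Laplace transform of a nonnegative measure. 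Then I would finish by composition: $x\mapsto\sqrt{x}$ is a nonnegative Bernstein function (its derivative $\tfrac12 x^{-1/2}$ is completely monotone), and complete monotonicity is preserved under composition with a Bernstein function, so $-\dfrac{K_\alpha'(\sqrt{x})}{K_\alpha(\sqrt{x})}\in\mathrm{CM}$; multiplying by the completely monotone factor $\dfrac{1}{2\sqrt{x}}$ and invoking the Lemma (the product of completely monotone functions is completely monotone) yields $g\in\mathrm{CM}$, and hence $K_\alpha(\sqrt{x})\in\mathrm{LCM}(]0,\infty[)$.
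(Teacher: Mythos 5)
Your reduction of LCM to the complete monotonicity of $g=-\frac{d}{dx}\ln K_\alpha(\sqrt{x})$, the computation of $g$ via Watson's formula, and the identity $g=\frac12 g_{\alpha,1/2}(x)-\frac{\alpha}{2x}$ are all correct, and you rightly flag that a difference of completely monotone functions proves nothing. But the step you propose to overcome this is exactly where the argument breaks: you assert that the ``classical Stieltjes representation (Ismail)'' exhibits $-K_\alpha'(t)/K_\alpha(t)$ as the Laplace transform of a nonnegative measure \emph{in the variable $t$}. Ismail's representation lives in the squared variable: it says that
\[
\frac{K_{\alpha-1}(\sqrt{x})}{\sqrt{x}\,K_\alpha(\sqrt{x})}=\frac{4}{\pi^2}\int_0^\infty \frac{t^{-1}\,dt}{(x+t^2)\left[J_\alpha^2(t)+Y_\alpha^2(t)\right]}
\]
is a Stieltjes transform, hence completely monotone, \emph{as a function of $x$}. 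Rewriting it in the variable $t=\sqrt{x}$ produces kernels of the form $\frac{t}{t^2+s}$, which are not completely monotone ($\frac{t}{t^2+1}$ is not even decreasing near $0$); in general, $S$ Stieltjes does not imply that $t\,S(t^2)$ is completely monotone, as $S(x)=\frac{1}{x+1}$ shows. So your pivotal claim --- that $t\mapsto K_\alpha(t)$ is LCM in $t$, which is genuinely \emph{stronger} than the proposition (it implies it by your Bernstein-composition argument, which is fine) --- is left without proof: the difference $\frac{K_{\alpha+1}(t)}{K_\alpha(t)}-\frac{\alpha}{t}$ is exactly as problematic in $t$ as the difference you started from in $x$; you have moved the difficulty, not resolved it.

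The gap disappears if you stay in the variable $x$ and use the recurrence in the opposite direction, eliminating $K_{\alpha+1}$ rather than $K_{\alpha-1}$: then
\[
-\frac{d}{dx}\ln K_\alpha(\sqrt{x})=\frac{1}{2}\,\frac{K_{\alpha-1}(\sqrt{x})}{\sqrt{x}\,K_\alpha(\sqrt{x})}+\frac{\alpha}{2x},
\]
a \emph{sum} of two completely monotone functions: the first term by Ismail's representation quoted above (each kernel $\frac{1}{x+t^2}$ is completely monotone in $x$ and the measure is nonnegative), the second trivially. This is essentially the paper's proof: it writes the negative logarithmic derivative in the symmetric form $\frac14\left[\frac{K_{\alpha-1}(\sqrt{x})}{\sqrt{x}K_\alpha(\sqrt{x})}+\frac{K_{\alpha+1}(\sqrt{x})}{\sqrt{x}K_\alpha(\sqrt{x})}\right]$, handles the first ratio by Ismail's integral representation and the second as $g_{\alpha,1/2}\in CM(]0,\infty[)$ from the preceding Proposition, and concludes. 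The missing idea in your attempt is thus precisely to apply Ismail's representation in the composed variable $x$, where it actually yields complete monotonicity, rather than trying to transfer it to the variable $t$.
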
 
 
 \begin{proof}
 In \cite{Is}, Ismail gave the integral representation 
 \begin{equation}
 \displaystyle \frac{K_{\alpha-1} (\sqrt{x})}{\sqrt{x}K_\alpha (\sqrt{x})}=\frac{4}{\pi^2} 
 \int_0^\infty  \frac{t^{-1} dt}{(x+t^2) \left[J_\alpha^2(t)+Y_\alpha^2(t)\right]}, \quad x>0, \quad \alpha \geq 0.
  \end{equation}
 Using the relationship , Watson (\cite{Wa}, p: 79)
 \begin{equation}
 \displaystyle K_\alpha '(x)=-\frac{1}{2} \left\{K_{\alpha-1}(x)+K_{\alpha+1}(x)\right\} \quad x>0,\quad \alpha \geq 0,
  \end{equation}
 we find 
\begin{equation}
\displaystyle \left[{\rm Ln} \left(K_\alpha (\sqrt{x}) \right)\right]'=
 -\frac{1}{2}\frac{K_\alpha '(\sqrt{x})}{\sqrt{x}K_\alpha (\sqrt{x})}=
    -\frac{1}{4} \left\{\frac{K_{\alpha-1} (\sqrt{x})}{\sqrt{x}K_\alpha (\sqrt{x})}+\frac{K_{\alpha+1} (\sqrt{x})}{\sqrt{x}K_\alpha (\sqrt{x})}\right\}.
\end{equation}
By the last integral representation,  It's clear that   
 $$\displaystyle \frac{K_{\alpha-1} (\sqrt{x})}{\sqrt{x}K_\alpha (\sqrt{x})}\in CM(]0,\infty[).$$
 Since 
 $$\displaystyle \frac{K_{\alpha+1} (\sqrt{x})}{\sqrt{x}K_\alpha (\sqrt{x})}=
 g_{\alpha,\frac{1}{2}}(x)\in CM(]0,\infty[).$$
 Thus 
 $$\displaystyle -\left[{\rm Ln} \left(K_\alpha (\sqrt{x}) \right)\right]'\in CM(]0,\infty[).$$ 
 and
 $$\displaystyle K_\alpha (\sqrt{x}) \in LCM(]0,\infty[).$$
 \end{proof}
  \begin{propo}
    Let $\alpha >0$, $x_0>0$ and $K_\alpha (\sqrt{x_0})=1$, then 
  \begin{equation}
  \displaystyle \Delta_\alpha(x)=\frac{-1}{{\rm Ln}\left(K_\alpha (\sqrt{x})\right)} \in LCM(]x_0,\infty[).
  \end{equation}
 \end{propo}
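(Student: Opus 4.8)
The plan is to realize $\Delta_\alpha$ as the reciprocal of a positive function whose derivative is completely monotonic, and then invoke the criterion established earlier (the theorem asserting that if $f\in C(I)$, $f>0$ and $f'\in {\rm CM}(I)$, then $1/f\in {\rm LCM}(I)$). Concretely, I set
$$f(x)=-{\rm Ln}\left(K_\alpha(\sqrt{x})\right),$$
so that $\Delta_\alpha(x)=1/f(x)$ on the set where $f>0$. The whole argument then reduces to checking the three hypotheses of that criterion on the interval $I=\,]x_0,\infty[$.

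First I would check positivity. By the preceding proposition $K_\alpha(\sqrt{x})\in {\rm LCM}(]0,\infty[)\subset {\rm CM}(]0,\infty[)$, so $x\mapsto K_\alpha(\sqrt{x})$ is nonincreasing; in fact it is strictly decreasing, since $\frac{d}{dx}K_\alpha(\sqrt{x})=\frac{1}{2\sqrt{x}}K_\alpha'(\sqrt{x})$ and $K_\alpha'=-\frac12(K_{\alpha-1}+K_{\alpha+1})<0$ by the recurrence used in the previous proof together with the positivity of $K_\beta$ on $]0,\infty[$. Combining this with the normalization $K_\alpha(\sqrt{x_0})=1$ gives $K_\alpha(\sqrt{x})<1$ for every $x>x_0$, hence ${\rm Ln}(K_\alpha(\sqrt{x}))<0$ and $f(x)>0$ on $]x_0,\infty[$. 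This is exactly where the hypothesis $K_\alpha(\sqrt{x_0})=1$ enters, and it explains why the interval is $]x_0,\infty[$ rather than all of $]0,\infty[$.

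Next I would verify $f'\in {\rm CM}(]x_0,\infty[)$. Differentiating, $f'(x)=-\left[{\rm Ln}(K_\alpha(\sqrt{x}))\right]'$, and the proof of the preceding proposition precisely establishes that $-\left[{\rm Ln}(K_\alpha(\sqrt{x}))\right]'\in {\rm CM}(]0,\infty[)$; restricting this to $]x_0,\infty[$ gives $f'\in {\rm CM}(]x_0,\infty[)$. Since $f$ is moreover $C^\infty$, hence continuous, all three hypotheses ($f\in C(I)$, $f>0$, $f'\in {\rm CM}(I)$) hold on $I=\,]x_0,\infty[$, and the criterion yields $1/f=\Delta_\alpha\in {\rm LCM}(]x_0,\infty[)$.

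The only real content beyond bookkeeping is the positivity step: one must pin down that $f$ is strictly positive on exactly $]x_0,\infty[$, which requires the strict monotonicity of $K_\alpha(\sqrt{x})$ together with the calibration $K_\alpha(\sqrt{x_0})=1$. Everything else is an immediate transcription into the reciprocal-form criterion and a reuse of the completely monotone derivative already produced in the previous proposition.
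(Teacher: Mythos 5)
Your proof is correct and follows essentially the same route as the paper: set $f(x)=-{\rm Ln}\left(K_\alpha(\sqrt{x})\right)$, obtain $f>0$ on $]x_0,\infty[$ from the monotonicity of $K_\alpha$ together with the calibration $K_\alpha(\sqrt{x_0})=1$, reuse the previous proposition to get $f'\in{\rm CM}$, and conclude by the criterion that $f>0$ with $f'\in{\rm CM}(I)$ implies $1/f\in{\rm LCM}(I)$. Your write-up is in fact slightly more careful than the paper's, since you justify the strict decrease of $K_\alpha(\sqrt{x})$ via $K_\alpha'=-\tfrac12\left(K_{\alpha-1}+K_{\alpha+1}\right)<0$ rather than merely asserting it.
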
 
 \begin{proof} Put 
 $$\displaystyle y_\alpha(x)= -{\rm Ln}\left(K_\alpha (\sqrt{x})\right), \quad x>x_0.$$
 Since $K_\alpha$ is decreasing on $]x_0, +\infty[$, we have 
 $$\displaystyle 0<K_\alpha (\sqrt{x})<K_\alpha (\sqrt{x_0})=1,\quad x>x_0.$$
 We have prove in the last proposition that 
 $$\displaystyle y_\alpha'(x)=-\left[{\rm Ln} \left(K_\alpha (\sqrt{x}) \right)\right]'\in CM(]x_0,\infty[).$$ 
 Theorem 9 completes the proof.
\end{proof} 
\subsection{Inequalities for the modified Bessel function of the second kind}
\begin{defin}{\bf (logarithmically-convex)} A positive function defined on a interval $I$ is said to be 
log-convex if ${\rm Ln} (f)$ is convex, i.e for all $x,y \in I$ and $\lambda \in [0,1]$, we have
 \begin{equation}
 \displaystyle f(\lambda x+(1-\lambda)y)\leq \left[f(x)\right]^\lambda \left[f(y)\right]^{1-\lambda}.
 \end{equation}
\end{defin}

\noindent We have immediately the following result:

\begin{lemme}
If $f$ is LCM function on an interval $I\subset \mathbb{R}$, then $f$ is log-convex on $I$.
\end{lemme}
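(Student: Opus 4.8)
The plan is to extract the convexity of $\ln f$ directly from the defining inequalities of the LCM class, specialized to a single value of $n$. Recall that by hypothesis $f \in LCM(I)$ means $f > 0$, $f \in C(I)$, $f$ has derivatives of all orders on $I^0$, and $(-1)^n [\ln f(x)]^{(n)} \geq 0$ for every $n \geq 1$ and every $x \in I^0$. The key observation is that we do not need the full infinite family of inequalities: it suffices to read off the case $n = 2$.

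First I would take $n = 2$ in the LCM condition. Since $(-1)^2 = 1$, this gives immediately
$$\left[\ln f(x)\right]'' \geq 0, \quad x \in I^0.$$
Thus the function $\ln f$ has a nonnegative second derivative on the interior $I^0$, which is precisely the second-derivative criterion for convexity. Hence $\ln f$ is convex on $I^0$. For a convex function the chordal inequality
$$\ln f\bigl(\lambda x + (1-\lambda) y\bigr) \leq \lambda \ln f(x) + (1-\lambda) \ln f(y)$$
holds for all $x, y \in I^0$ and $\lambda \in [0,1]$; exponentiating both sides yields exactly the log-convexity inequality $f(\lambda x + (1-\lambda)y) \leq [f(x)]^\lambda [f(y)]^{1-\lambda}$ stated in the definition.

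The only point requiring a small amount of care — and the closest thing to an obstacle here — is that the derivatives are guaranteed only on the open interior $I^0$, whereas the definition of log-convexity is stated on all of $I$. I would handle this by invoking the continuity hypothesis: since $f \in C(I)$ and $f > 0$, the composition $\ln f$ is continuous on $I$, so the convexity inequality established on the interior extends by a standard limiting argument to the endpoints of $I$ as well. This completes the argument. Overall the result is essentially an immediate consequence of the $n=2$ instance of the logarithmic complete monotonicity, so I expect the proof to be very short, with no genuine technical difficulty beyond this routine interior-to-closure extension.
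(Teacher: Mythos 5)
Your proof is correct and coincides with what the paper intends: the paper states this lemma with no proof (``We have immediately the following result''), the implicit argument being exactly your $n=2$ specialization of the LCM condition, which gives $[\ln f]''\geq 0$ and hence convexity of $\ln f$. Your additional care about extending from $I^0$ to $I$ by continuity is a valid (and slightly more complete) touch, but the substance is the same.
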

\begin{theorem}
 For $\alpha>0$,\\
 
\noindent 1) if $x,y>0$, then
 \begin{equation}
 \displaystyle K_\alpha\left(\sqrt{\frac{x+y}{2}}\right)\leq \sqrt{K_\alpha(\sqrt{x})K_\alpha(\sqrt{y})}
 \leq \left(\frac{x+y}{2\sqrt{xy}}\right)^{\frac{\alpha+1}{2}} K_\alpha\left(\sqrt{\frac{x+y}{2}}\right).
 \end{equation}
 
\noindent 2)  if $x,y>x_0>0$, where  $K_\alpha(\sqrt{x_0})=1$, then
 \begin{equation}
   \displaystyle  \sqrt{-{\rm Ln} \left(K_\alpha (\sqrt{x}) \right)}\sqrt{-{\rm Ln} \left(K_\alpha (\sqrt{y}) \right)}\leq -{\rm Ln}\left( K_\alpha \left(\sqrt{\frac{x+y}{2}}\right)\right).
\end{equation}
 In each of the above inequalities equality hold if and only if $x=y$.

\end{theorem}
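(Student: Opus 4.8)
The plan is to derive all three inequalities from a single soft principle: every logarithmically completely monotonic function is log-convex (the lemma just stated), and log-convexity is exactly the statement $f(\lambda x+(1-\lambda)y)\le [f(x)]^\lambda [f(y)]^{1-\lambda}$. Specializing to the midpoint $\lambda=\frac12$ produces precisely the geometric-mean comparisons appearing above, so no fresh analytic estimate is required; each claimed bound is the midpoint case of log-convexity for one of the three functions already shown to be $LCM$.

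For the left-hand inequality in part 1, I would apply log-convexity to $f(x)=K_\alpha(\sqrt{x})$, which lies in $LCM(]0,\infty[)$ by the corresponding proposition above. Taking $\lambda=\frac12$ gives at once
$$K_\alpha\left(\sqrt{\frac{x+y}{2}}\right)\le \sqrt{K_\alpha(\sqrt{x})\,K_\alpha(\sqrt{y})}.$$
For the right-hand inequality I would instead use $g(x)=\frac{1}{x^{\frac{\alpha+1}{2}}K_\alpha(\sqrt{x})}$, which is in $LCM(]0,\infty[)$ by the first theorem of this section. Its midpoint log-convexity $g\left(\frac{x+y}{2}\right)\le \sqrt{g(x)g(y)}$, after writing out both sides, clearing the powers of $x$, $y$, and $\frac{x+y}{2}$, and taking reciprocals (which reverses the inequality), becomes
$$\sqrt{K_\alpha(\sqrt{x})K_\alpha(\sqrt{y})}\le \frac{\left(\frac{x+y}{2}\right)^{\frac{\alpha+1}{2}}}{(xy)^{\frac{\alpha+1}{4}}}\,K_\alpha\left(\sqrt{\frac{x+y}{2}}\right),$$
and the prefactor simplifies to $\left(\frac{x+y}{2\sqrt{xy}}\right)^{\frac{\alpha+1}{2}}$, exactly the asserted bound. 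The only care needed here is bookkeeping of the exponents during the reciprocal step.

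For part 2 I would apply the same midpoint principle to $\Delta_\alpha(x)=-1/\ln(K_\alpha(\sqrt{x}))$, which is in $LCM(]x_0,\infty[)$ by the preceding proposition. Writing $y_\alpha(x)=-\ln(K_\alpha(\sqrt{x}))$, which is positive on $]x_0,\infty[$ since $K_\alpha(\sqrt{x})<K_\alpha(\sqrt{x_0})=1$ there, log-convexity of $\Delta_\alpha=1/y_\alpha$ gives $1/y_\alpha(\frac{x+y}{2})\le 1/\sqrt{y_\alpha(x)y_\alpha(y)}$; inverting yields $y_\alpha(\frac{x+y}{2})\ge \sqrt{y_\alpha(x)y_\alpha(y)}$, which is the stated inequality.

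Finally, the equality discussion is the one genuinely extra point. Midpoint log-convexity is an equality precisely when $\ln f$ is affine on $[x,y]$, so to force $x=y$ I must upgrade log-convexity to \emph{strict} log-convexity, i.e. $(\ln f)''>0$ on the open interval. This holds because each of the $LCM$ functions above comes from a non-degenerate integral representation (equivalently, the associated measure is not a point mass), whence its logarithm is strictly convex and equality in each midpoint bound occurs only at $x=y$. This strictness verification is the main, and essentially the only, obstacle; everything else is a direct specialization of the log-convexity inequality.
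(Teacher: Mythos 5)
Your proposal is correct and follows essentially the same route as the paper: each inequality is the midpoint case of log-convexity applied to the three $LCM$ functions established earlier ($K_\alpha(\sqrt{x})$, $\frac{1}{x^{\frac{\alpha+1}{2}}K_\alpha(\sqrt{x})}$, and $\Delta_\alpha$), with the same reciprocal manipulations. In fact you are more careful than the paper on the equality case, which the paper asserts but never proves; your observation that one needs strict log-convexity (via non-degeneracy of the underlying representing measures) is a genuine gap in the paper's own argument that your sketch correctly identifies and fills.
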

\begin{proof}
1) By Proposition 6, we know that for $\alpha>0$, the function 
$$\displaystyle K_\alpha (\sqrt{x}) \in LCM(]0,\infty[).$$ 
Thus it is log-convex on $]0,\infty[$. We get
$$\displaystyle K_\alpha\left(\sqrt{\frac{x+y}{2}}\right)\leq \sqrt{K_\alpha(\sqrt{x})K_\alpha(\sqrt{y})}.$$ 
By Theorem 11, we know that for $\alpha>0$, the function  
$$\displaystyle \frac{1}{x^{\frac{\alpha+1}{2}}K_\alpha (\sqrt{x})} \in LCM(]0,\infty[).$$
 Thus, it is log-convex on $]0,\infty[$. We get
 $$\displaystyle \sqrt{K_\alpha(\sqrt{x})K_\alpha(\sqrt{y})}\leq \left(\frac{x+y}{2\sqrt{xy}}\right)^{\frac{\alpha+1}{2}} K_\alpha\left(\sqrt{\frac{x+y}{2}}\right) .$$
2) By Proposition 7, we have, for $x,y>x_0>0$, where  $K_\alpha(\sqrt{x_0})=1$ 
$$\displaystyle \Delta_\alpha(x)=\frac{-1}{{\rm Ln}\left(K_\alpha (\sqrt{x})\right)} \in LCM(]x_0,\infty[).$$
 Thus, it is log-convex on $]x_0,\infty[$. We get: 
  $$\displaystyle \Delta_\alpha (\frac{x+y}{2})\leq \Delta_\alpha(x)\Delta_\alpha(y),$$
  which completes the proof.
\end{proof}
%\section*{Acknowledgement}
%{\it  The authors would like to thank Professor  X for his
%valuable suggestions and helpful remarks.}

\end {document}